\theoremstyle{plain}
\newtheorem{thm}{Theorem}
\newtheorem{cor}[thm]{Corollary}
\newtheorem{lem}[thm]{Lemma}
\newtheorem{prop}[thm]{Proposition}
\newcommand{\ep}{\varepsilon}
\newcommand{\om}{\omega}
\newcommand{\Sig}{\Sigma}
\newcommand{\vp}{\varphi}
\newcommand{\bs}{\backslash}
\newcommand{\ol}{\overline}
\newcommand{\inte}{\operatorname{int}}
\newcommand{\Lip}{\operatorname{Lip}}
\newcommand{\lip}{\operatorname{lip}}
\newcommand{\N}{{\mathbb N}}
\newcommand{\R}{{\mathbb R}}
\newcommand{\D}{{\mathcal D}}
\newcommand{\cD}{{\mathcal D}}
\newcommand{\C}{{\mathbb C}}
\newcommand{\U}{{\mathcal U}}
\begin{document}

\title{Biseparating maps on generalized Lipschitz spaces}

\begin{abstract}
Let $X, Y$ be complete metric spaces and $E, F$ be Banach spaces.
A bijective linear operator from a space of $E$-valued functions on $X$ to a space of $F$-valued functions on $Y$ is said to be biseparating if $f$ and  $g$ are disjoint if and only if $Tf$ and $Tg$ are disjoint. We
introduce the class of generalized Lipschitz spaces, which includes
as special cases the classes of Lipschitz, little Lipschitz and
uniformly continuous functions. Linear biseparating maps between
generalized Lipschitz spaces are characterized as weighted
composition operators, i.e., of the form $Tf(y) = S_y(f(h^{-1}(y))$
for a family of vector space isomorphisms $S_y: E \to F$ and a
homeomorphism $h : X\to Y$. We also investigate the continuity of
$T$ and related questions.  Here the functions involved (as well as
the metric spaces $X$ and $Y$) may be unbounded.  Also, the
arguments do not require the use of compactification of the spaces
$X$ and $Y$.
\end{abstract}

\author{Denny H. Leung}
\address{Department of Mathematics, National University of Singapore, 2 Science Drive
2, Singapore 117543.}
\email{matlhh@nus.edu.sg}
\thanks{Research partially supported by AcRF project no.\ R-146-000-086-112}
\keywords{Vector-valued Lipschitz functions, biseparating maps}
\subjclass[2000]{46E15; 47B38; 47B33}

\maketitle

\section{Introduction}

In his classical treatise, {\em Th\'{e}orie des Op\'{e}rations
Lin\'{e}aires} \cite{B}, Banach proved that the linear isometric
structure of the Banach space $C(X)$ of continuous functions  on a
compact metric space determines the space $X$ up to homeomorphism.
The result was generalized by Stone \cite{St} to general compact
Hausdorff spaces $X$.  Subsequently, Gelfand and Kolmogorov
\cite{GK} and Kaplansky \cite{K} showed that $X$ is also determined
up to homeomorphism by the algebraic structure and the lattice
structure of $C(X)$ respectively. In the intervening decades, these
types of results have been generalized to many other classes of
function spaces and also to spaces of vector-valued functions.  The
classic monograph \cite{GJ} considers the relationship between the
algebraic structure of spaces of continuous functions on $X$ and the
space $X$ itself for general classes of topological spaces.  The
work \cite{Be} uses the Banach-Stone Theory of vector-valued
continuous functions $C(X,E)$ as a tool to study the Banach space
$E$ itself, leading to the theory of $M$- and $L$-structures of
Banach spaces. For the general theory of isometries on Banach
spaces, we refer the reader to the two-volume monograph of Fleming
and Jamison \cite{FJ}.  For a survey on various aspects of research
surrounding Banach-Stone type theorems, see \cite{GJ2}.

A useful unifying notion that has been introduced into the theory is
that of separating or biseparating maps.  Two functions $f$ and $g$
defined on the same domain $X$ with values in a vector space are
said to be {\em disjoint} if for all $x \in X$, either $f(x) = 0$ or
$g(x)= 0$.  A map $T$ between vector-valued function spaces is {\em
separating} (also called {\em disjointness preserving} or a {\em
Lamperti operator}) if $T$ maps disjoint functions to disjoint
functions.  It is {\em biseparating} if $T$ is invertible and both
$T$ and $T^{-1}$ are separating. Clearly, algebraic or lattice
homomorphisms (isomorphisms) are separating (biseparating). In many
instances, isometries between Banach function spaces can also be
shown to be biseparating.  This explains the interest and amount of
work devoted to the characterization of separating or biseparating
operators.  See, e.g., \cite{A1,A2,A2-1,A3,ABN,AK1,AK2,GJW,JL,LW}.

The study of Lipschitz spaces can be traced back to de Leeuw
\cite{dL} and Sherbert \cite{S1,S2} for the scalar case, and Johnson
\cite{J} for the vector-valued case.  A survey on the algebra of
Lipschitz functions can be found in \cite{W}.  Recent work on
separating and biseparating maps on Lipschitz spaces and spaces of
uniformly continuous functions include \cite{AD,GJ4, GJ3, J-V, J-V2,
JVW,JW}.  In particular, characterizations of biseparating maps on
spaces of {\em bounded} Lipschitz or little Lipschitz functions are
obtained in \cite{AD,JVW,JW}. In this paper, we consider spaces of
functions determined by the ``modulus of continuity" and call such
classes {\em generalized Lipschitz spaces}.  This notion serves to
unify the study of spaces of Lipschitz, little Lipschitz and
uniformly continuous functions.  One of the main aims of this paper is to characterize all
biseparating operators between generalized Lipschitz spaces. We  make use of a new approach
that bypasses the usual compactification procedures, and is rather
more closely tied to the metric structure of the underlying spaces.
(See \S 2.) The second critical ingredient in our argument is the
construction of ``bump" functions (Lemma \ref{bump}). Taking
advantage of such ``bumps" allows us to complete the
characterization of biseparating maps as weighted composition
operators (Theorem \ref{thm11}). In \S 3, we consider questions
connected with automatic continuity.

Let $(X,d)$ be a complete metric space and $E$ be a real or complex Banach space.
For a function $f:X\to E$, its {\em modulus of continuity} is the
function $\om_f:[0,\infty)\to [0,\infty]$ defined by
\[ \om_f(\ep) = \sup\{\|f(x_1)-f(x_2)\|: d(x_1,x_2) \leq \ep\}.\]
Note that  $f$ is uniformly continuous on $X$ if and only if $\om_f$
is continuous at $0$. In general, we say that $\sigma:
[0,\infty)\to[0,\infty]$ is a {\em modulus function} if $\sigma$ is
nondecreasing, $\sigma(0) = 0$ and $\sigma$ is continuous at $0$.  A
nonempty set $\Sig$ of modulus functions is called a {\em modulus
set} if
\begin{enumerate}
\item[(MS1)] If $\sigma_1,\sigma_2$ belong to $\Sig$,
then there exist $\sigma \in \Sig$ and $K < \infty$ such that
$\sigma_1+\sigma_2 \leq K \sigma$,
\item[(MS2)] For every sequence $(\sigma_n)$ in $\Sig$ and every
nonnegative summable real sequence $(a_n)$, there are a $\sigma \in \Sig$ and $K < \infty$
so that $\sum a_n(\sigma_n\wedge 1) \leq
K\sigma$.
\end{enumerate}
Let $\Sig$ be a modulus set.  Define the {\em generalized Lipschitz
space} $\Lip_\Sig(X,E)$ to the the set of all functions $f : X \to
E$ such that $\om_f \leq K \sigma$ for some $\sigma \in \Sig$ and $K
< \infty$.  Since $\om_{cf_1+f_2} \leq |c|\om_{f_1} + \om_{f_2}$, it
follows from (MS1) that $\Lip_\Sig(X,E)$ is a vector space.
We reiterate that all functions in $\Lip_\Sig(X,E)$ are necessarily uniformly continuous.  Also $\Lip_\Sig(X,E)$ always contains all constant functions.
When $E
= \R$ or $\C$, $\Lip_\Sig(X,E)$ is abbreviated to $\Lip_\Sig(X)$. To justify
the introduction of this new class of spaces, let us look at a few
examples.\\

\noindent {\bf Examples}.
\begin{enumerate}
\item If $\Sigma$ consists of the identity function $\sigma(t) = t$ only, then $\Lip_\Sigma(X,E)$ is the class of Lipschitz functions $\Lip(X,E)$.  Observe that if $0< \alpha < 1$ and we let $X^\alpha$ be the space $X$ with the metric $d^\alpha$, then $\Lip(X^\alpha,E)$ is the class $\Lip_\alpha(X,E)$ of Lipschitz functions (on $(X,d)$) of order $\alpha$.
\item If $\Sigma$ consists of all modulus functions $\sigma$ such that $\sigma(t) \leq t$ for all $t \geq 0$ and $\lim_{t\to 0}\sigma(t)/t = 0$, then $\Lip_\Sigma(X,E)$ is the small Lipschitz class $\lip(X,E)$.  Again, for $0 < \alpha < 1$, $\lip_\alpha(X,E) = \lip(X^\alpha,E)$.
\item If $\Sigma$ is the set of all modulus functions, then $\Lip_\Sig(X,E)$ is the space of uniformly continuous functions $\U(X,E)$ from $X$ to $E$.
\item If $\Sig$ is a modulus set and $\Sig_b = \{\sigma\wedge 1: \sigma \in \Sig\}$, then $\Lip_{\Sig_b}(X,E)$ is the set of all {\em bounded} functions in $\Lip_\Sig(X,E)$.
\end{enumerate}

A generalized Lipschitz space $\Lip_\Sig(X)$ is said to be {\em
Lipschitz normal} if for every pair of subsets $U, V$ of $X$ with
$d(U,V) >0$, there exists $f \in \Lip_\Sig(X)$, $0 \leq f \leq 1$,
such that $f=0$ on $U$ and $f=1$ on $V$.  We will say that
$\Lip_\Sig(X,E)$ is Lipschitz normal if $\Lip_\Sig(X)$ is. For any
metric space $X$, $\Lip(X)$, $\lip(X^\alpha)$, $0 < \alpha <1$, and $\U(X)$ are Lipschitz normal.
Another example is the following: $\lip(\Delta)$ is Lipschitz
normal, where $\Delta$ is the Cantor set with the usual metric. {\em In this paper, all generalized Lipschitz spaces considered are
assumed to be Lipschitz normal}.

If $f$ belongs to $\Lip_\Sig(X,E)$, let $C(f)$ be the set $\{x\in X:
f(x) \neq 0\}$ and denote its closure by $\overline{C}(f)$. If
$(Y,d')$ is a complete metric space, $F$ is a Banach space and
$\Sig'$ is a modulus set, we may define the space
$\Lip_{\Sigma'}(Y,F)$ as above.  A linear map $T: \Lip_\Sig(X,E) \to
\Lip_{\Sig'}(Y,F)$ is said to be {\em biseparating} if $T$ is a
bijection and for all $f, g \in \Lip_\Sig(X,E)$,
\[ C(f)\cap C(g) = \emptyset \text{ if and only if } C(Tf) \cap C(Tg) = \emptyset.\]

The author thanks Wee-Kee Tang for many stimulating conversations regarding the materials contained herein.

\section{The Boolean algebra of closures of open sets}

Let $X$ be a complete metric space with metric $d$.  Denote by $\cD(X)$ the
collection of all subsets $A$ of $X$ such that $A= \overline{\inte
A}$.  Equivalently, $A \in \cD(X)$ if and only if $A$ is the closure
of an open subset of $X$.  In particular, $\overline{C}(f) \in \cD(X)$ for every $f \in \Lip_\Sig(X,E)$. $\cD(X)$ is a Boolean algebra under
the order of set inclusion, with lattice operations
\[ A\vee B = A\cup B \quad \text{and} \quad A\wedge B = \overline{\inte A \cap \inte B}\]
for all $A, B \in \D(X)$.  The $0$ and $1$ elements of $\cD(X)$ are
$\emptyset$ and $X$ respectively; the complement of $A \in \cD(X)$
is $\neg A = \overline{A^c}$, where $A^c$ is the set theoretic
complement of $A$.  For basic properties of Boolean algebras we refer the reader to \cite{MB}.
We
begin with a simple but fundamental observation.

\begin{prop}\label{prop0}
Let $\vp$ be a function in $\Lip_\Sig(X)$ with values in $[0,1]$ and let $f \in \Lip_\Sig(X,E)$ be such that $\|f(x)\| \leq M$ for all $x \in C(\vp)$.  Then $\vp f \in \Lip_\Sig(X,E)$ and $\om_{\vp f} \leq \om_f +M\om_\vp$.
\end{prop}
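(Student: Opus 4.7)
The plan is a direct pointwise estimate of $\|\vp(x_1)f(x_1) - \vp(x_2)f(x_2)\|$ for $d(x_1,x_2)\le\ep$, followed by one appeal to (MS1). The natural starting point is the identity
\[ \vp(x_1)f(x_1) - \vp(x_2)f(x_2) = \vp(x_1)\bigl(f(x_1)-f(x_2)\bigr) + \bigl(\vp(x_1)-\vp(x_2)\bigr)f(x_2), \]
which together with $|\vp|\le 1$ would immediately give $\om_{\vp f}(\ep)\le\om_f(\ep) + M\om_\vp(\ep)$ if one had $\|f\|\le M$ everywhere. Since the hypothesis only furnishes this bound on $C(\vp)$, I would split into cases according to which of $\vp(x_1),\vp(x_2)$ vanish.

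When both vanish, the difference is $0$; when both are nonzero, both $x_1,x_2\in C(\vp)$, so $\|f(x_1)\|,\|f(x_2)\|\le M$ and the identity above yields the bound directly. In the mixed case, say $\vp(x_2)=0$ and $\vp(x_1)\neq 0$, the difference collapses to $\vp(x_1)f(x_1)$; noting that $|\vp(x_1)-\vp(x_2)|=|\vp(x_1)|$ and $x_1\in C(\vp)$ so $\|f(x_1)\|\le M$, this term is bounded by $M|\vp(x_1)-\vp(x_2)|\le M\om_\vp(\ep)$. The remaining mixed case is symmetric.

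Taking the supremum over pairs with $d(x_1,x_2)\le\ep$ then gives $\om_{\vp f}\le\om_f + M\om_\vp$ pointwise on $[0,\infty)$. Finally, choosing $\sigma_1,\sigma_2\in\Sig$ and $K_1,K_2<\infty$ with $\om_f\le K_1\sigma_1$ and $\om_\vp\le K_2\sigma_2$, property (MS1) supplies $\sigma\in\Sig$ and $K<\infty$ with $\sigma_1+\sigma_2\le K\sigma$, whence $\om_{\vp f}\le(K_1+MK_2)K\sigma$, and so $\vp f\in\Lip_\Sig(X,E)$. The only subtle point, and it is very minor, is recognizing that the naive product-rule estimate fails because $f$ is only bounded on $C(\vp)$; this is what forces the brief case split above.
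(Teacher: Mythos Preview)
Your proof is correct and follows essentially the same route as the paper: the product-rule identity together with a case split according to whether $\vp$ vanishes at the points in question. The paper streamlines your three cases into two by relabeling so that $x_2\in C(\vp)$ whenever at least one of the points lies there, and it leaves the final appeal to (MS1) implicit; otherwise the arguments are the same.
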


\begin{proof}
Suppose that $d(x_1,x_2) \leq \ep$. If neither $x_1$ nor $x_2$ lies in $C(\vp)$, then $\|(\vp f)(x_1) - (\vp f)(x_2)\| = 0$.  Otherwise, we may assume that $x_2 \in C(\vp)$ and hence $\|f(x_2)\| \leq M$.  Therefore,
\begin{align*}
\|(\vp f)(x_1) - &(\vp f)(x_2)\| \\&\leq |\vp(x_1)| \|f(x_1)-f(x_2)\| + |\vp(x_1)-\vp(x_2)| \|f(x_2)\|\\&
\leq \om_f(\ep) + M\om_{\vp}(\ep).
\end{align*}
\end{proof}

The next lemma is similar to Lemma 4.2 in \cite{A1}.

\begin{lem}\label{lem1.1}
Let $T: \Lip_\Sig(X,E) \to \Lip_{\Sig'}(Y,F)$ be a biseparating map. If $f,g \in
\Lip_\Sig(X,E)$ and $\overline{C}(f) \subseteq \overline{C}(g)$, then $C(Tf)
\subseteq \overline{C}(Tg)$.
\end{lem}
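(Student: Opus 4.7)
The plan is to argue by contradiction: suppose there exists $y_0 \in C(Tf)$ with $y_0 \notin \overline{C}(Tg)$. The strategy is to build, via Lipschitz normality on the $Y$-side, a ``test function'' on $Y$ that vanishes on $\overline{C}(Tg)$ but is nonzero at $y_0$, transport it to $X$ by $T^{-1}$, and use the containment $\overline{C}(f)\subseteq \overline{C}(g)$ to apply the biseparating property back to obtain a contradiction.

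Concretely, since $\overline{C}(Tg)$ is closed and $y_0$ lies outside it, $d'(\{y_0\},\overline{C}(Tg))>0$, so Lipschitz normality of $\Lip_{\Sig'}(Y)$ yields $\psi\in\Lip_{\Sig'}(Y)$ with $0\le\psi\le 1$, $\psi(y_0)=1$ and $\psi\equiv 0$ on $\overline{C}(Tg)$. Fix any nonzero $v\in F$; the function $\widetilde\psi=\psi v$ lies in $\Lip_{\Sig'}(Y,F)$ (its modulus of continuity is $\|v\|\,\om_\psi$), and $C(\widetilde\psi)=C(\psi)$ is disjoint from $C(Tg)$. Set $h=T^{-1}(\widetilde\psi)\in\Lip_\Sig(X,E)$. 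Since $T$ is biseparating, $C(h)\cap C(g)=\emptyset$.

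Now comes the key point. Because $h$ is (uniformly) continuous, $C(h)$ is open; an open set disjoint from $C(g)$ is automatically disjoint from its closure, so $C(h)\cap\overline{C}(g)=\emptyset$. The hypothesis $\overline{C}(f)\subseteq\overline{C}(g)$ then gives $C(h)\cap C(f)=\emptyset$. Applying the biseparating property once more yields $C(Th)\cap C(Tf)=\emptyset$, i.e.\ $C(\widetilde\psi)\cap C(Tf)=\emptyset$. But $\widetilde\psi(y_0)=v\neq 0$ and $y_0\in C(Tf)$ by assumption, so $y_0\in C(\widetilde\psi)\cap C(Tf)$, the desired contradiction.

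There is no serious obstacle here once one has Lipschitz normality; the only delicate points are (a) verifying that the bump $\widetilde\psi$ actually lives in the vector-valued space $\Lip_{\Sig'}(Y,F)$ (immediate from the scalar-times-constant form) and (b) noting that we must pass from ``disjoint from $C(g)$'' to ``disjoint from $\overline{C}(g)$''—which uses the openness of $C(h)$, and hence implicitly the continuity of functions in $\Lip_\Sig(X,E)$. Everything else is a direct double application of the biseparating hypothesis together with the transport via $T^{-1}$.
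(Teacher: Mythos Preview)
Your proof is correct and follows the same contrapositive strategy as the paper: pick a bump on $Y$ separating $y_0$ from $\overline{C}(Tg)$, push it back through $T^{-1}$, use openness of cozero sets to upgrade disjointness from $C(g)$ to $\overline{C}(g)$, and apply biseparating in both directions.

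The only noteworthy difference is the choice of test function. The paper multiplies the scalar bump $\psi$ by $Tf$ itself, obtaining $\psi\,Tf$; this requires invoking Proposition~\ref{prop0} and first shrinking the ball so that $Tf$ is bounded on $C(\psi)$. You instead multiply $\psi$ by a fixed nonzero vector $v\in F$, which lands in $\Lip_{\Sig'}(Y,F)$ immediately and sidesteps both Proposition~\ref{prop0} and the local-boundedness step. Your variant is thus slightly more economical for this particular lemma; the paper's choice of $\psi\,Tf$ is a device that recurs later (e.g.\ in Lemma~\ref{lem6}), where one genuinely needs to localize an existing image function rather than an arbitrary constant.
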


\begin{proof}
Suppose that $y \notin \overline{C}(Tg)$.  There exists $\ep > 0$ so
that $B(y,\ep) \cap \overline{C}(Tg) = \emptyset$ and that $Tf$ is
bounded on $B(y,\ep)$.  Let $\psi$ be a $[0,1]$-valued function in
$\Lip_{\Sig'}(Y)$ so that $\psi = 1$ on $B(y,\ep/2)$ and $\psi = 0$ outside
$B(y,\ep)$. Then $\psi Tf\in \Lip_{\Sig'}(Y,F)$ by Proposition \ref{prop0} and $C(\psi Tf)\cap C(Tg) =
\emptyset$.  Thus $C(T^{-1}(\psi Tf)) \cap C(g) = \emptyset$. Since
$C(T^{-1}(\psi Tf))$ is an open set, it follows that $ C(T^{-1}(\psi
Tf)) \cap \overline{C}(g) = \emptyset$, and hence $C(T^{-1}(\psi
Tf)) \cap C(f) = \emptyset$.  Therefore, $C(\psi Tf) \cap C(Tf) =
\emptyset$.  In particular, since $\psi(y) \neq 0$, we must have
$Tf(y) = 0$.  So $y \notin C(Tf)$.
\end{proof}

\begin{lem}\label{lem1.2}
For each open subset $U$ of $X$, there exists $f \in \Lip_\Sig(X,E)$ such that $C(f) = U$.
\end{lem}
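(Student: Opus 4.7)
The plan is to express $U$ as a countable union of closed sets that sit at positive distance from $X \setminus U$, apply Lipschitz normality on each such pair to produce ``bumps'' $f_n$, and then combine them into a single vector-valued function using a rapidly decreasing series so that (MS2) delivers the required modulus control.

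First I would handle the trivial cases: if $U = \emptyset$, take $f \equiv 0$; if $U = X$, take $f$ to be a fixed nonzero constant $e_0 \in E$ (constants belong to $\Lip_\Sig(X,E)$). Otherwise, for each $n \geq 1$ set
\[ V_n = \{x \in X : d(x, X \setminus U) \geq 1/n\}. \]
Each $V_n$ is closed, $V_n \subseteq U$, and $U = \bigcup_{n=1}^\infty V_n$ since $U$ is open. Moreover $d(V_n, X \setminus U) \geq 1/n > 0$, so Lipschitz normality supplies $f_n \in \Lip_\Sig(X)$ with $0 \leq f_n \leq 1$, $f_n = 1$ on $V_n$, and $f_n = 0$ on $X \setminus U$. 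Choose $\sigma_n \in \Sig$ and $K_n < \infty$ with $\om_{f_n} \leq K_n \sigma_n$, and set $K_n' = \max(1,K_n)$.

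Fix $e_0 \in E$ with $\|e_0\| = 1$ and define
\[ f(x) = \sum_{n=1}^\infty \frac{1}{2^n K_n'}\, f_n(x)\, e_0. \]
The series converges absolutely and uniformly since the $f_n$ are bounded by $1$. To identify $C(f)$: every $f_n$ vanishes off $U$, so $f$ does too; conversely, if $x \in U$ then $x \in V_n$ for all large $n$, so $f_n(x) = 1$. Because each $f_m(x) \geq 0$, the sum $\sum 2^{-m}(K_m')^{-1} f_m(x)$ is then strictly positive, giving $f(x) \neq 0$. Hence $C(f) = U$.

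Finally I would check $f \in \Lip_\Sig(X,E)$ by bounding $\om_f$. Since $0 \leq f_n \leq 1$, one has $\om_{f_n} \leq 1$, and combined with $\om_{f_n} \leq K_n \sigma_n$ a short case analysis gives $\om_{f_n} \leq K_n'(\sigma_n \wedge 1)$. Therefore
\[ \om_f \leq \sum_{n=1}^\infty \frac{1}{2^n K_n'}\, \om_{f_n} \leq \sum_{n=1}^\infty 2^{-n}(\sigma_n \wedge 1), \]
and axiom (MS2) (applied with the summable sequence $(2^{-n})$) yields $\sigma \in \Sig$ and $K < \infty$ with $\om_f \leq K\sigma$. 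The main (and only) point requiring care is arranging the coefficients so that the pointwise positivity identifying $C(f)$ with $U$ and the modulus estimate needed for (MS2) are compatible; the dual bound $\om_{f_n} \leq K_n'(\sigma_n \wedge 1)$ is precisely what allows a single $\sigma \in \Sig$ to dominate the combined modulus.
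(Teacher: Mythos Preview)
Your proof is correct and follows essentially the same approach as the paper: write $U$ as the increasing union of the closed sets $V_n=\{x:d(x,X\setminus U)\geq 1/n\}$, use Lipschitz normality to build $[0,1]$-valued bumps, and sum them with coefficients $1/(2^nK_n')$ (the paper uses $1/(n^2(K_n+1))$) so that (MS2) controls the resulting modulus. The only cosmetic differences are your explicit treatment of the trivial cases $U=\emptyset,\,U=X$ and the particular choice of summable weights.
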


\begin{proof}
For each $n\in \N$, let $U_n$ be the set of all $x \in X$ such that $d(x,U^c) \geq 1/n$.
Since $\Lip_{\Sig}(X,E)$ is Lipschitz normal, there exists $\vp_n \in \Lip_\Sig(X)$ with values in $[0,1]$ so that $\vp_n = 0$ on $U^c$ and $\vp_n = 1$ on $U_n$.  Take $\sigma_n \in \Sig$ and $K_n < \infty$ so that $\om_{\vp_n} \leq K_n\sigma_n$.  Note that $\om_{\vp_n} \leq 1$ as well.  So, by redefining the constant $K_n$ if necessary, we may assume that $\om_{\vp_n} \leq K_n(\sigma_n\wedge 1)$. The function $\vp = \sum \vp_n/(n^2(K_n+1))$ converges on $X$. Also,
\[ \om_\vp \leq \sum\frac{\om_{\vp_n}}{n^2(K_n+1)} \leq \sum\frac{\sigma_n\wedge 1}{n^2} \leq K\sigma\]
for some $\sigma \in \Sig$ and $K < \infty$ by condition (MS2) in the definition of modulus sets. Thus $\vp \in \Lip_\Sig(X)$.  Clearly $C(\vp) = U$.  Finally, choose any nonzero $u \in E$ and $f(x) = \vp(x)u$ is a function with the desired properties.
\end{proof}

\begin{prop}\label{prop1.2}
Let $T: \Lip_\Sig(X,E) \to \Lip_{\Sig'}(Y,F)$ be a biseparating map. For each $A \in \cD(X)$, let $\theta(A) = \overline{C}(Tf)$ for some
$f\in \Lip_\Sig(X,E)$ such that $C(f) = \inte A$.  Then $\theta$ is a
well-defined Boolean isomorphism from $\cD(X)$ onto $\cD(Y)$.
Moreover, for any $f\in \Lip_\Sig(X,E)$ and any $A \in \cD(X)$,  $f = 0$ on
$A$ if and only if $Tf = 0$ on $\theta(A)$.
\end{prop}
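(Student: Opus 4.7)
My plan is to prove the statement in the order (a) $\theta$ is well-defined with values in $\cD(Y)$, (b) $\theta$ is monotone, (c) the biconditional ``$f=0$ on $A$ iff $Tf=0$ on $\theta(A)$'' for every $f\in\Lip_\Sig(X,E)$ and $A\in\cD(X)$, and (d) $\theta$ is a Boolean isomorphism; step (d) will then follow from (a)--(c) by essentially formal manipulation. For (a), the existence of an $f$ with $C(f)=\inte A$ is Lemma~\ref{lem1.2} applied to the open set $\inte A$, and well-definedness comes from two applications of Lemma~\ref{lem1.1}: if $C(f_1)=C(f_2)=\inte A$, then $\overline{C}(f_1)=\overline{C}(f_2)$, hence $\overline{C}(Tf_1)=\overline{C}(Tf_2)$. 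Since $C(Tf)$ is always open, $\theta(A)\in\cD(Y)$. Monotonicity (b) is the same idea: if $A\subseteq B$ in $\cD(X)$, then $\inte A\subseteq\inte B$, so for suitable $f,g$ we have $\overline{C}(f)\subseteq\overline{C}(g)$, and Lemma~\ref{lem1.1} gives $\theta(A)\subseteq\theta(B)$.

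The crux is (c). For the ``only if'' direction, if $f=0$ on $A$ and $C(g)=\inte A$, then $C(f)\cap C(g)=\emptyset$, so by biseparation $C(Tf)\cap C(Tg)=\emptyset$; since $C(Tf)$ is open the disjointness upgrades to $C(Tf)\cap\overline{C}(Tg)=C(Tf)\cap\theta(A)=\emptyset$. The ``if'' direction is more delicate, because the biseparating hypothesis yields information only about pairs of functions with disjoint carriers, so to link a single $f$ with the set $\theta(A)$ I will manufacture an auxiliary witness. Assume $f$ is nonzero at some point of $A$; since $C(f)$ is open and $\inte A$ is dense in $A$, the set $U:=C(f)\cap\inte A$ is nonempty and open. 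Use Lemma~\ref{lem1.2} to produce $g$ with $C(g)=U$. Two applications of Lemma~\ref{lem1.1}, via $\overline{C}(g)\subseteq\overline{C}(f)$ and $\overline{C}(g)\subseteq A$, place $C(Tg)$ simultaneously inside $\overline{C}(Tf)$ and $\theta(A)$. As $T$ is bijective and $g\ne 0$, $C(Tg)$ is nonempty and open; a nonempty open set contained in $\overline{C(Tf)}$ must meet $C(Tf)$, and any such intersection point lies in $C(Tf)\cap C(Tg)\subseteq C(Tf)\cap\theta(A)$, so $Tf$ is nonzero somewhere on $\theta(A)$.

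With (c) in hand, everything else is short. Injectivity of $\theta$ follows at once from (c), since $\theta(A)=\theta(B)$ forces $\{f:f=0\text{ on }A\}=\{f:f=0\text{ on }B\}$, and a bump function from Lemma~\ref{lem1.2} supported on a small ball around a would-be point of $A\setminus B$ yields a contradiction unless $A=B$. The same bump argument gives the reverse of monotonicity, so $\theta$ is an order-isomorphism onto its range. For surjectivity I run the construction with $T^{-1}$: given $B\in\cD(Y)$, pick $g$ with $C(g)=\inte B$ (Lemma~\ref{lem1.2} applied to $Y$), set $f:=T^{-1}g$ and $A:=\overline{C}(f)$, and note that by Lemma~\ref{lem1.1} (applied to $\overline{C}(f)=\overline{C}(f')$ for $f'$ with $C(f')=\inte A$) one has $\theta(A)=\overline{C}(Tf)=\overline{C}(g)=B$. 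An order-isomorphism between Boolean algebras is automatically a Boolean isomorphism since $0$, $1$, $\vee$, $\wedge$ are order-determined and complements are unique, which completes (d). The main obstacle in this plan is the ``if'' direction of (c); its resolution hinges on recognising that the correct witness $g$ to feed to Lemma~\ref{lem1.1} is not $f$ itself but rather a function carried by the open set $C(f)\cap\inte A$ produced via Lemma~\ref{lem1.2}.
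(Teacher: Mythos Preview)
Your argument is correct, but you reorder the logic relative to the paper, and this affects how the ``if'' direction of the biconditional is handled. The paper first introduces the companion map $\tau:\cD(Y)\to\cD(X)$ built from $T^{-1}$ in the same way $\theta$ is built from $T$, and proves directly (via two applications of Lemma~\ref{lem1.1} to $T^{-1}$, using that $\overline{C}(g)=\overline{C}(Tf)$ when $C(g)=\inte\theta(A)$) that $\tau\circ\theta=\mathrm{id}$ and $\theta\circ\tau=\mathrm{id}$. With $\theta^{-1}=\tau$ already in hand, the ``if'' half of the biconditional is then literally the ``only if'' half applied to $T^{-1}$ and $\tau$; no auxiliary witness is needed. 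You instead establish the full biconditional first, manufacturing $g$ with $C(g)=C(f)\cap\inte A$ to force $C(Tg)$ into both $\overline{C}(Tf)$ and $\theta(A)$, and only afterwards extract injectivity, order-reflection, and surjectivity of $\theta$ via bump-function arguments. Both routes are valid; the paper's is shorter and more symmetric once one notices that $\tau$ is available, while yours has the virtue of giving a self-contained proof of the ``if'' direction that does not presuppose bijectivity of $\theta$.
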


\begin{proof}
The fact that $\theta$ is well defined follows from Lemmas
\ref{lem1.1} and \ref{lem1.2}. By Lemma \ref{lem1.1}, $\theta$ preserves order.
Analogously, we can define $\tau: \cD(Y)\to \cD(X)$ by $\tau(B) =
\overline{C}(T^{-1}g)$ for some $g \in \Lip_{\Sig'}(Y,F)$ such that $C(g) =
\inte B$.  If $A \in \cD(X)$ and $f \in \Lip_\Sig(X,E)$ with $C(f) = \inte
A$, then $\theta(A) = \overline{C}(Tf)$.  Let $g \in \Lip_{\Sig'}(Y,F)$ be such
that $C(g) = \inte \theta(A)$. By Lemma \ref{lem1.1} applied to $T^{-1}$,
$\overline{C}(f) = \overline{C}(T^{-1}g)$.  Thus $A =
\overline{C}(f) = \tau(\theta(A))$.  Similarly, $\theta(\tau(B)) =
B$ for all $B \in \cD(Y)$.  Hence $\tau = \theta^{-1}$. Since both
$\theta$ and $\theta^{-1}$ are order preserving, $\theta$ is a
Boolean isomorphism.

If $f \in \Lip_\Sig(X,E)$ and $f = 0$ on $A \in \cD(X)$, then $C(f) \cap
C(f') = \emptyset$ for any $f' \in \Lip_\Sig(X,E)$ with $C(f') = \inte A$.
Hence $C(Tf)\cap C(Tf') = \emptyset$.  By continuity of $Tf$, $Tf =
0$ on $\overline{C}(Tf') = \theta(A)$. The converse follows by
symmetry.
\end{proof}

\section{Characterization of biseparating maps}

Let $X$ and $Y$ be complete metric spaces, $E$ and $F$ be Banach spaces, and $\Sig$ and $\Sig'$ be two modulus sets.
The closed unit ball of $F$ is denoted by $B_F$.  We begin with an easy observation.

\begin{lem}\label{lem1}
For any $a > 0$, the retraction $r: F \to aB_F$ defined by
\[ r(v) = \begin{cases}
          v &\text{ if $v \in aB_F$}\\
          \frac{av}{\|v\|} &\text{ otherwise}.
          \end{cases} \]
is a Lipschitz map with  $\om_r(t) \leq 2t$.
\end{lem}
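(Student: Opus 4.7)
The plan is to prove $\|r(v_1) - r(v_2)\| \le 2\|v_1 - v_2\|$ for all $v_1, v_2 \in F$ by splitting into three cases according to whether each $v_i$ lies in $aB_F$. Once this inequality holds, it immediately gives $\om_r(t) \le 2t$.

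First, the easy case: if $v_1, v_2 \in aB_F$, then $r(v_i) = v_i$ and we get equality $\|r(v_1)-r(v_2)\| = \|v_1-v_2\|$, well within the bound.

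Second, suppose $v_1 \in aB_F$ but $v_2 \notin aB_F$. Then $r(v_2) = av_2/\|v_2\|$ is the radial projection of $v_2$ onto the sphere of radius $a$. The key observation is that $\|v_2 - r(v_2)\| = \|v_2\| - a$, while $\|v_1 - v_2\| \ge \|v_2\| - \|v_1\| \ge \|v_2\| - a$ since $\|v_1\|\le a$. Hence $\|v_2 - r(v_2)\| \le \|v_1 - v_2\|$, and the triangle inequality yields
\[ \|r(v_1) - r(v_2)\| = \|v_1 - r(v_2)\| \le \|v_1 - v_2\| + \|v_2 - r(v_2)\| \le 2\|v_1-v_2\|. \]
This is where the factor of $2$ comes from, and I expect this case to be the tight one.

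Third, suppose $v_1, v_2 \notin aB_F$. Using the identity
\[ \frac{v_1}{\|v_1\|} - \frac{v_2}{\|v_2\|} = \frac{v_1 - v_2}{\|v_1\|} + \frac{v_2(\|v_2\| - \|v_1\|)}{\|v_1\|\,\|v_2\|}, \]
together with $|\|v_2\|-\|v_1\|| \le \|v_1-v_2\|$, I get $\|v_1/\|v_1\| - v_2/\|v_2\|\| \le 2\|v_1-v_2\|/\|v_1\|$. Multiplying by $a$ and using $a/\|v_1\| \le 1$ gives $\|r(v_1) - r(v_2)\| \le 2\|v_1 - v_2\|$.

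No step presents a real obstacle; the only subtlety is organizing the cases and remembering to use the radial projection geometry in the mixed case rather than attempting a direct algebraic estimate.
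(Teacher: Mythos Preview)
Your proof is correct. The paper actually omits the proof of this lemma entirely, introducing it only as ``an easy observation''; your three-case argument supplies exactly the routine details one would expect, and each estimate is valid.
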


\begin{lem}\label{bump}
Let $g$ be a function in $\Lip_{\Sig'}(Y,F)$. For all $a > 0$ and all $b \geq
2a$, there is a function $\tilde{g} \in \Lip_{\Sig'}(Y,F)$ with
$\om_{\tilde{g}} \leq 3\om_g$ such that
\[ \tilde{g}(y) = \begin{cases}
                  g(y) &\text{ if $\|g(y)\|\leq a$}\\
                  0 &\text{ if $\|g(y)\| \geq b$.}
                  \end{cases}
\]
\end{lem}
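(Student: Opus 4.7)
The plan is to realize $\tilde g$ as a scalar multiple of $g$. Let $\psi : [0,\infty) \to [0,1]$ be the piecewise linear cutoff that equals $1$ on $[0,a]$, decreases linearly on $[a,b]$, and equals $0$ on $[b,\infty)$, so that $\psi$ is Lipschitz with constant $1/(b-a)$. Set $\vp(y) = \psi(\|g(y)\|)$ and $\tilde g(y) = \vp(y) g(y)$. By construction, $\tilde g = g$ on $\{\|g\|\leq a\}$ and $\tilde g = 0$ on $\{\|g\|\geq b\}$, so the boundary conditions required by the lemma hold automatically; what remains is the modulus estimate, from which $\tilde g \in \Lip_{\Sig'}(Y,F)$ will follow.

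By the reverse triangle inequality, $y \mapsto \|g(y)\|$ has modulus of continuity at most $\om_g$, and composing with $\psi$ then yields $\om_\vp \leq \om_g/(b-a)$. Since $\om_g \leq K\sigma$ for some $\sigma \in \Sig'$ and $K < \infty$, this also shows $\vp \in \Lip_{\Sig'}(Y)$. On $C(\vp) = \{y : \|g(y)\| < b\}$ one has $\|g(y)\| \leq b$, so Proposition \ref{prop0}, applied with $f = g$ and $M = b$, gives
\[ \om_{\tilde g} \leq \om_g + b\,\om_\vp \leq \Bigl(1 + \frac{b}{b-a}\Bigr)\om_g.\]
The hypothesis $b \geq 2a$ enters precisely here: it forces $b - a \geq a$, hence $b/(b-a) \leq 2$, and the bound $\om_{\tilde g} \leq 3\om_g$ drops out. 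Membership of $\tilde g$ in $\Lip_{\Sig'}(Y,F)$ follows because $3\om_g \leq 3K\sigma$ with $\sigma \in \Sig'$.

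There is no real obstacle in this plan; the main point requiring care is that the multiplicative constant $3$ is calibrated exactly to the separation hypothesis $b \geq 2a$, so one could not weaken this hypothesis without paying a larger constant. Once the cutoff $\vp$ is constructed, the estimate is essentially a one-line application of Proposition \ref{prop0}.
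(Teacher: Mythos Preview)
Your argument is correct. It differs from the paper's construction in a small but genuine way, so a brief comparison is warranted.

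The paper defines $\tilde g(y)=\gamma(\|g(y)\|)\,r(g(y))$, where $\gamma$ is the same piecewise-linear cutoff you use and $r:F\to aB_F$ is the retraction of Lemma~\ref{lem1}. The retraction forces $\|r\circ g\|\le a$ globally, so in the product estimate the ``size'' term contributes $a\cdot\om_\gamma\le\om_g$, while the retracted part contributes $\om_{r\circ g}\le 2\om_g$; summing gives $3\om_g$. Your construction skips the retraction entirely: you multiply $g$ itself by the cutoff and invoke Proposition~\ref{prop0} with the bound $M=b$ on $C(\vp)=\{\|g\|<b\}$, obtaining $\om_{\tilde g}\le \om_g + b\,\om_\vp\le(1+b/(b-a))\om_g\le 3\om_g$ once $b\ge 2a$. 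The two resulting bump functions are different (yours can have norm up to $b$, the paper's is bounded by $a$), but both satisfy the lemma's requirements.

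What your route buys is economy: Lemma~\ref{lem1} becomes unnecessary for this argument, and the estimate is literally a specialization of Proposition~\ref{prop0}. What the paper's route buys is a $\tilde g$ with the extra feature $\|\tilde g\|\le a$, though that bound is not used downstream. Either way, the hypothesis $b\ge 2a$ is exactly what calibrates the constant to~$3$.
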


\begin{proof}
Let $r: F \to aB_F$ be the retraction defined in Lemma \ref{lem1}.
Then $\om_{r\circ g} \leq 2\om_g$ and $r\circ g$ is bounded in norm
by $a$.  For any $b \geq 2a$, the function $\gamma: [0, \infty) \to
[0,1]$ defined by
\[ \gamma(t) = \begin{cases}
               1 &\text{ if $t \in [0,a]$}\\
               \frac{b-t}{b-a} & \text{ if $t \in (a,b)$}\\
               0 &\text{ if $t \in [b,\infty)$}
               \end{cases}
\]
satisfies $\om_\gamma(t) \leq t/a$.  Let $\tilde{g}(y)
= \gamma(\|g(y)\|)r(g(y))$. Clearly, $\tilde{g}(y) = g(y)$ if
$\|g(y)\|\leq a$ and $0$ if $\|g(y)\| \geq b$.  For all $y_1, y_2
\in Y$ with $d'(y_1,y_2) \leq \ep$,
\begin{align*}
\|\tilde{g}(y_1) - \tilde{g}(y_2)\| & \leq \gamma(\|g(y_1)\|)\|r(g(y_1)) - r(g(y_2))\| \\
&\quad+ |\gamma(\|g(y_1)\|) - \gamma(\|g(y_2)\|)|\, \|r(g(y_2))\|\\
&\leq \om_{r\circ g}(\ep) + \frac{1}{a}\bigl|\|g(y_1)\|-\|g(y_2)\|\bigr|a \\
&\leq 3\om_g(\ep).
\end{align*}
Thus $\om_{\tilde{g}}\leq 3\om_g$ and $\tilde{g} \in \Lip_{\Sig'}(Y,F)$.
\end{proof}

\begin{lem}\label{lem2.1}
Let $(f_n)$ be a pairwise disjoint sequence of functions from $X$ into $E$. Assume that there is a modulus function $\sigma$ such that $\om_{f_n}\leq \sigma$ for all $n$.    Then the pointwise sum $f = \sum f_n$ satisfies $\om_f \leq 2\sigma$.
\end{lem}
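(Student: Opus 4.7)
The plan is to do a direct case analysis on two points $x_1,x_2\in X$ with $d(x_1,x_2)\leq\ep$ and bound $\|f(x_1)-f(x_2)\|$. The key structural point is that by pairwise disjointness, at each single point $x$ at most one $f_n(x)$ is nonzero, so the pointwise sum $f(x)=\sum_n f_n(x)$ is well defined (the "series" has at most one nonzero term at each point).

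First I would fix $x_1,x_2$ with $d(x_1,x_2)\leq\ep$ and split into four cases according to whether each of $x_1$, $x_2$ lies in some $C(f_n)$. If neither point lies in any $C(f_n)$, then $f(x_1)=f(x_2)=0$ and the bound is trivial. If exactly one does, say $x_1\in C(f_n)$ and $x_2\notin\bigcup_m C(f_m)$, then $f(x_2)=0$ and, crucially, $f_n(x_2)=0$ as well, so $\|f(x_1)-f(x_2)\|=\|f_n(x_1)-f_n(x_2)\|\leq\om_{f_n}(\ep)\leq\sigma(\ep)$. If both points lie in the same $C(f_n)$, again $\|f(x_1)-f(x_2)\|=\|f_n(x_1)-f_n(x_2)\|\leq\sigma(\ep)$.

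The only case that produces the factor $2$ is when $x_1\in C(f_n)$ and $x_2\in C(f_m)$ with $n\neq m$. Here pairwise disjointness gives $f_m(x_1)=0$ and $f_n(x_2)=0$, so
\[
\|f(x_1)-f(x_2)\|=\|f_n(x_1)-f_m(x_2)\|\leq\|f_n(x_1)-f_n(x_2)\|+\|f_m(x_1)-f_m(x_2)\|\leq 2\sigma(\ep),
\]
using the uniform bound $\om_{f_n}\leq\sigma$ for every $n$. Taking the supremum over all such pairs yields $\om_f\leq 2\sigma$, and in particular $f$ inherits the modulus estimate that puts it into $\Lip_\Sig(X,E)$ once one knows $\sigma\leq K\sigma_0$ for some $\sigma_0\in\Sig$.

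There is no real obstacle here; the only thing to be slightly careful about is the trick of inserting a zero value at a point outside a given $C(f_n)$, which converts a raw norm like $\|f_n(x_1)\|$ into a modulus-of-continuity bound $\|f_n(x_1)-f_n(x_2)\|$. This observation drives all the nontrivial cases.
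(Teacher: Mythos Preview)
Your proof is correct and follows essentially the same approach as the paper. The paper compresses your four cases into two by observing that for any pair $x_1,x_2$ one may always write $f(x_i)=f_{n_1}(x_i)$ for a single index $n_1$ (covering your first three cases) or $f(x_i)=(f_{n_1}+f_{n_2})(x_i)$ for two indices (your fourth case), and then bounds by $\om_{f_{n_1}}$ or $\om_{f_{n_1}}+\om_{f_{n_2}}$ accordingly; the ``insert a zero'' trick you highlight is exactly what makes this compression work.
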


\begin{proof}
For any $x_1, x_2 \in X$,
either there exists $n_1$ such that $f(x_i) = f_{n_1}(x_i)$, $i =
1,2$, or there are $n_1$ and $n_2$ so that $f(x_i) =
(f_{n_1}+f_{n_2})(x_i)$, $i = 1,2$.  It follows that $\om_f
\leq 2\sup_n\om_{f_n} \leq 2\sigma$.
\end{proof}

For the rest of the section, we consider a linear biseparating map $T: \Lip_{\Sig}(X,E) \to \Lip_{\Sig'}(Y,F)$.  Let
$\theta$ be the associated Boolean isomorphism from Proposition
\ref{prop1.2}.  If $u$ is a vector in $E$ or $F$, denote by $\hat{u}$ the constant function (defined on $X$ or $Y$) with value $u$. The next proposition is a key to subsequent arguments.

\begin{prop}\label{prop3}
For any $x_0 \in X$, there exists $f \in \Lip_{\Sig}(X,E)$ such that $f(x_0) \neq 0$ and $Tf$ is bounded on $Y$.
\end{prop}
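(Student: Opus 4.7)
We argue by contradiction: suppose that $Tf$ is unbounded on $Y$ whenever $f \in \Lip_\Sig(X,E)$ satisfies $f(x_0) \ne 0$; equivalently, $T^{-1}h(x_0) = 0$ for every bounded $h \in \Lip_{\Sig'}(Y,F)$. Fix a nonzero $u \in E$ and set $g := T\hat u$; by hypothesis $g$ is unbounded. The plan is first to reduce the proposition to a local boundedness statement for $g$, then to rule out the failure of that statement by a disjoint-bumps construction on the $Y$ side.

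\emph{Reduction.} Suppose one can find $r>0$ such that $g$ is bounded on $B:=\theta(\overline{B(x_0,r)})$. Put $a:=\sup_B\|g\|+1$, $b:=2a$, and apply Lemma \ref{bump} to $g$: the resulting $\tilde g\in\Lip_{\Sig'}(Y,F)$ is bounded and agrees with $g$ wherever $\|g\|\le a$, hence on all of $B$. Thus $g-\tilde g$ vanishes on $B$, and the last assertion of Proposition \ref{prop1.2} forces $\hat u-T^{-1}\tilde g$ to vanish on $\theta^{-1}(B)=\overline{B(x_0,r)}$. The function $f:=T^{-1}\tilde g$ then satisfies $f(x_0)=u\ne 0$ while $Tf=\tilde g$ is bounded, as required.

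\emph{Ruling out the bad case.} It remains to show that $g$ is bounded on some $B_n:=\theta(\overline{B(x_0,1/n)})$. Assume for contradiction that $g$ is unbounded on every $B_n$, and pick $y_n\in B_n$ with $\|g(y_n)\|\ge n$. Uniform continuity of $g$ rules out Cauchy subsequences of $(y_n)$, since any such would make $(g(y_n))$ Cauchy and hence bounded; after passing to a subsequence $(y_n)$ is therefore $\delta$-separated for some $\delta>0$. Using Lipschitz normality of $\Lip_{\Sig'}(Y)$ and the summable-weight construction from the proof of Lemma \ref{lem1.2}, choose disjointly supported bumps $\psi_n\in\Lip_{\Sig'}(Y)$ with $\psi_n(y_n)=1$ and $C(\psi_n)\subseteq B(y_n,\delta/3)$, whose moduli are jointly dominated by a single element of $\Sig'$ via (MS2). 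Then Proposition \ref{prop0} and Lemma \ref{lem2.1} place $h:=\sum_n\psi_n v$ in $\Lip_{\Sig'}(Y,F)$, for any nonzero $v\in F$, bounded by $\|v\|$. Our standing hypothesis yields $T^{-1}h(x_0)=0$, while the constraint $y_n\in B_n$ pins the supports of the pieces $\psi_n v$ to the Boolean neighborhood filter $(B_n)$ of $x_0$ under $\theta^{-1}$. The main obstacle, and the technical heart of the argument, is to leverage this concentration --- via Proposition \ref{prop1.2} and possibly a second invocation of Lemma \ref{bump} to modify $h$ into a bounded function forced to take the nonzero value $u$ at $x_0$ --- so as to contradict $T^{-1}h(x_0)=0$ and close the proof.
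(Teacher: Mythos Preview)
Your reduction step is correct and is essentially the content of the paper's Lemma~\ref{lem2}: if $g=T\hat u$ were bounded on $\theta(\overline{B(x_0,r)})$ for some $r>0$, then truncating $g$ via Lemma~\ref{bump} and pulling back through Proposition~\ref{prop1.2} immediately yields the desired $f$.

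The ``ruling out the bad case'' paragraph, however, is not a proof, and the approach you sketch does not lead to one. The function $h=\sum_n\psi_n v$ you construct is just a generic bounded element of $\Lip_{\Sigma'}(Y,F)$; your standing hypothesis tells you $T^{-1}h(x_0)=0$, but nothing forces $T^{-1}h(x_0)\ne 0$, so there is no contradiction. The claim that $y_n\in B_n$ ``pins the supports \dots\ to the Boolean neighbourhood filter $(B_n)$'' is wishful: $y_n\in B_n$ does not imply $B(y_n,\delta/3)\subseteq B_n$, nor does it give any control on $\theta^{-1}(\overline{C(\psi_n)})$. And the suggestion to ``modify $h$ into a bounded function forced to take the nonzero value $u$ at $x_0$'' cannot work as stated: $h$ takes values in $F$, and nothing in your construction connects $h$ to $g$, which is the only function whose pullback under $T^{-1}$ you actually know.

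The missing idea, which is the heart of the paper's argument, is to build the auxiliary function out of \emph{level sets of $g$ itself} rather than arbitrary bumps centred at the $y_n$. Using Lemma~\ref{bump} one produces, for a suitable sequence $a_1<b_1<a_2<b_2<\cdots$, functions $G_n$ that agree with $g$ on the shell $\{\|g\|\in[b_{2n-1},a_{2n}]\}$ and vanish outside $\{\|g\|\in(a_{2n-1},b_{2n})\}$; these are pairwise disjoint with uniformly controlled moduli, so a subsum $G$ lies in $\Lip_{\Sigma'}(Y,F)$. By Proposition~\ref{prop1.2}, $T^{-1}G$ equals $\hat u$ on $\theta^{-1}$ of certain shells and $0$ on $\theta^{-1}$ of the interleaving ones. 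The key additional fact (the paper's Lemma~\ref{lem3}, a genuine refinement of your reduction) is that for every $\varepsilon>0$ and every $a>0$ there exists $b>a$ with $\theta^{-1}\bigl(\overline{\{\|g\|\in(a,b)\}}\bigr)\wedge\overline{B(x_0,\varepsilon)}\neq 0$; this is precisely what allows one to choose the shells so that their $\theta^{-1}$-images meet arbitrarily small balls around $x_0$. One then obtains points $x_m\to x_0$ with $T^{-1}G(x_{2m-1})=u$ and $T^{-1}G(x_{2m})=0$, contradicting continuity of $T^{-1}G$. Your sequence $(y_n)$ and the bumps $\psi_n$ play no role in such an argument.
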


\begin{proof}
Suppose that the proposition fails.  We have $x_0 \in X$ so that
$Tf$ is unbounded whenever $f(x_0) \neq 0$.  Pick any $u \in E \bs
\{0\}$ and let  $g
= T\hat{u}$.  First we need two lemmas.

\begin{lem}\label{lem2}
For all $a>0$ and all $\ep > 0$, $\theta^{-1}(\overline{\{\|g\|> a\}}) \wedge \overline{B(x_0,\ep)} \neq 0$.
\end{lem}

\begin{proof}
Suppose that $\theta^{-1}(\overline{\{\|g\|> a\}}) \wedge
\overline{B(x_0,\ep)} = 0$ for some $a, \ep > 0$.  Let $V =
\neg(\overline{\{\|g\|> a\}})$.  Then
\begin{align*}
& \inte V \cap \inte \overline{\{\|g\|> a\}} = \emptyset\\
\implies &\inte V \subseteq \{\|g\| \leq a\}\\
\implies &V = \overline{\inte V} \subseteq \{\|g\| \leq a\}.
\end{align*}
Since $g$ is uniformly continuous, $d'(V, {\{\|g\|\geq 2a\}}) > 0$.
Hence there exists $\psi\in\Lip_{\Sig'}(Y)$ such that $0 \leq \psi \leq 1$,
$\psi = 1$ on $V$ and $\psi = 0$ on ${\{\|g\|\geq 2a\}}$.  Note that
$g$ is bounded on $C(\psi)$ and hence $\psi g \in \Lip_{\Sig'}(Y,F)$ by Proposition \ref{prop0}.  As $\psi
g = g$ on $V$, we  have $T^{-1}(\psi g) = T^{-1}g = \hat{u}$ on
$\theta^{-1}(V)$ by Proposition \ref{prop1.2}.  From
$\theta^{-1}(\overline{\{\|g\|> a\}}) \wedge \overline{B(x_0,\ep)} =
0$ and keeping in mind the definition of $V$, we see that
$\overline{B(x_0,\ep)} \subseteq \theta^{-1}(V)$. In particular
$T^{-1}(\psi g)(x_0) = u \neq 0$.  By assumption, $\psi g$ is
unbounded on $Y$.  But this is clearly false since $\|(\psi g)(y)\|
\leq 2a$ for all $y \in Y$.
\end{proof}

\begin{lem}\label{lem3}
For all $a>0$ and all $\ep > 0$, there exists $b > a$ so that
$\theta^{-1}(\overline{\{\|g\|\in (a,b)\}}) \wedge
\overline{B(x_0,\ep)} \neq 0$.
\end{lem}

\begin{proof}
For each $n\in \N$, let $V_n = \overline{\{\|g\|\in (a,a+n)\}}$.
The sequence $(V_n)$ increases to $\overline{\{\|g\|> a\}}$ in
$\D(Y)$.  Hence $(\theta^{-1}(V_n))$ increases to
$\theta^{-1}(\overline{\{\|g\|> a\}})$ in $\D(X)$.  If the lemma
fails, $\theta^{-1}(V_n) \subseteq \neg(\overline{B(x_0,\ep)})$ for
all $n$ and thus $\theta^{-1}(\overline{\{\|g\|> a\}}) \subseteq
\neg(\overline{B(x_0,\ep)})$, contrary to Lemma \ref{lem2}.
\end{proof}

We now continue with the proof of Proposition \ref{prop3}.   Let
$(\ep_n)$ be a positive null sequence and set  $a_1=1$.  By Lemma
\ref{bump}, there exist $b_1 > a_1$ and $\tilde{g}_1 \in \Lip_{\Sig'}(Y,F)$
with $\om_{\tilde{g}_1} \leq 3\om_g$ such that
\[ \tilde{g}_1(y) = \begin{cases}
                  g(y) &\text{ if $\|g(y)\|\leq a_1$}\\
                  0 &\text{ if $\|g(y)\| \geq b_1$.}
                  \end{cases}
\]
In general, after $a_n, b_n$ have been determined, use Lemma
\ref{lem3} to choose $a_{n+1} > b_n$ such that
$\theta^{-1}(\overline{\{\|g\|\in (b_{n},a_{n+1})\}}) \wedge
\overline{B(x_0,\ep_{n})} \neq 0$.  Then apply Lemma \ref{bump} to
obtain $b_{n+1} > a_{n+1}$ and $\tilde{g}_{n+1}\in \Lip_{\Sig'}(Y,F)$ with
$\om_{\tilde{g}_{n+1}} \leq 3\om_g$ so that
\[ \tilde{g}_{n+1}(y) = \begin{cases}
                  g(y) &\text{ if $\|g(y)\|\leq a_{n+1}$}\\
                  0 &\text{ if $\|g(y)\| \geq b_{n+1}$.}
                  \end{cases}
\]
For each $n$, let $G_n = \tilde{g}_{2n} - \tilde{g}_{2n-1}$.  Then
$G_n \in \Lip_{\Sig'}(Y,F)$ and $\om_{G_n} \leq 6\,\om_{g}$. Also $G_n(y) = 0$
if $\|g(y)\| \notin (a_{2n-1},b_{2n})$ and $G_n(y) = g(y)$ if
$\|g(y)\| \in [b_{2n-1},a_{2n}]$. In particular, $C(G_n)$ and
$C(G_m)$ are disjoint if $n \neq m$.  Thus the pointwise sum $G =
\sum G_{2m-1}$ is well defined and $\om_G \leq 12\om_g$ by Lemma \ref{lem2.1}. Hence $G \in \Lip_{\Sig'}(Y,F)$. For each $m$, let $V_m =
\overline{\{\|g\|\in (b_{2m-1},a_{2m})\}}$.  By the choice of
$a_{2m}$, one can find $x_m \in \theta^{-1}(V_m) \cap
\overline{B(x_0,\ep_{2m-1})}$.  Now $G = G_{2m-1} = g$ on
${V_{2m-1}}$ and $G = 0$ on ${V_{2m}}$ for all $m$. Hence $T^{-1}G =
T^{-1}g = \hat{u}$ on $\theta^{-1}({V_{2m-1}})$ and $T^{-1}G = 0$ on
$\theta^{-1}({V_{2m}})$ by  Proposition \ref{prop1.2}.  In
particular, $T^{-1}G(x_{2m-1}) = u \neq 0$ and $T^{-1}G(x_{2m}) = 0$
for all $m$.  Since the sequence $(x_m)$ converges to $x_0$ and
$T^{-1}G$ is continuous, we have reached a contradiction.  This
completes the proof of Proposition \ref{prop3}.
\end{proof}

\begin{lem}\label{lem6}
For any $x_0 \in X$, $\cap_{\ep > 0}\theta(\overline{B(x_0,\ep)})$ contains at most $1$ point.
\end{lem}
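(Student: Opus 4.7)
The plan is to argue by contradiction. Suppose two distinct points $y_1 \neq y_2$ both lie in $\bigcap_{\ep > 0} \theta(\overline{B(x_0, \ep)})$, and choose $\delta > 0$ small enough that the closed balls $\overline{B(y_1, \delta)}$ and $\overline{B(y_2, \delta)}$ are separated by positive distance. Both sets belong to $\cD(Y)$, so $D_i := \theta^{-1}(\overline{B(y_i, \delta)}) \in \cD(X)$ for $i = 1, 2$.

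The crucial preliminary step I would carry out is to show that $x_0 \in D_i$ for each $i = 1, 2$. This is where the Boolean isomorphism structure does the work: if $x_0 \notin D_i$, the closedness of $D_i$ furnishes some $\ep > 0$ with $B(x_0, \ep) \subseteq X \setminus D_i$, so $\overline{B(x_0, \ep)} \subseteq \overline{X \setminus D_i} = \neg D_i$ in $\cD(X)$. Applying the order- and complement-preserving map $\theta$ from Proposition \ref{prop1.2} yields $\theta(\overline{B(x_0, \ep)}) \subseteq \neg \overline{B(y_i, \delta)} = \overline{Y \setminus \overline{B(y_i, \delta)}}$. Since $y_i$ is an interior point of $\overline{B(y_i, \delta)}$, it cannot lie in the closure of its complement, contradicting $y_i \in \theta(\overline{B(x_0, \ep)})$.

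With $x_0 \in D_1 \cap D_2$ in hand, I would invoke Proposition \ref{prop3} to pick $f \in \Lip_\Sig(X, E)$ with $f(x_0) \neq 0$ and $g := Tf$ bounded on $Y$. Lipschitz normality of $\Lip_{\Sig'}(Y)$ then supplies a $[0,1]$-valued $\psi \in \Lip_{\Sig'}(Y)$ equal to $1$ on $\overline{B(y_1, \delta)}$ and to $0$ on $\overline{B(y_2, \delta)}$, and Proposition \ref{prop0} gives $\psi g \in \Lip_{\Sig'}(Y, F)$, so $h := T^{-1}(\psi g)$ is well-defined. On $\overline{B(y_1, \delta)}$ we have $\psi g - g = 0$, so Proposition \ref{prop1.2} yields $h - f = 0$ on $D_1$, forcing $h(x_0) = f(x_0) \neq 0$. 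On $\overline{B(y_2, \delta)}$ we have $\psi g = 0$, so similarly $h = 0$ on $D_2$, forcing $h(x_0) = 0$. This clash finishes the argument.

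The main obstacle is the bridge in the second paragraph: translating the pointwise hypothesis ``$y_i \in \theta(\overline{B(x_0, \ep)})$ for every $\ep$'' into the membership $x_0 \in \theta^{-1}(\overline{B(y_i, \delta)})$. Once that Boolean-algebra passage is in place, the rest reduces to a routine use of Propositions \ref{prop3}, \ref{prop0}, and \ref{prop1.2} together with a bump function from Lipschitz normality.
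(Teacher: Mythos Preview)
Your proof is correct and follows essentially the same route as the paper's. The only minor differences are that the paper avoids Proposition~\ref{prop3} by choosing $\delta$ small enough that $g=Tf$ is bounded on $B(y_1,2\delta)$ (using a bump $\psi$ supported there), and it reaches $T^{-1}(\psi g)(x_0)=f(x_0)$ (resp.\ $=0$) via the density argument $\overline{B(x_0,\ep)}\wedge\theta^{-1}(\overline{B(y_i,\delta)})\neq 0$ for all $\ep>0$ plus continuity, rather than your direct Boolean-complement verification that $x_0\in D_i$.
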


\begin{proof}
Suppose on the contrary that $y_1$ and $y_2$ are distinct points in
the intersection.  Let $f \in \Lip_{\Sig}(X,E)$ be such that $f(x_0) \neq 0$
and set $g = Tf$.
Choose $\delta
> 0$ so that $d'(y_1,y_2)
> 3\delta$ and that $g$ is bounded on $B(y_1,2\delta)$. Pick $\psi
\in \Lip_{\Sig'}(Y)$ so that $\psi = 1$ on $\overline{B(y_1,\delta)}$ and
$\psi = 0$ outside $B(y_1, 2\delta)$. Since $g$ is bounded on
$C(\psi)$, $\psi g\in \Lip_{\Sig'}(Y,F)$. As $\psi g= g$ on
$\overline{B(y_1,\delta)}$, $T^{-1}(\psi g) = f$ on
$\theta^{-1}(\overline{B(y_1,\delta)})$. Similarly, $T^{-1}(\psi g)
= 0$ on $\theta^{-1}(\overline{B(y_2,\delta)})$. For $i = 1,2$ and
any $\ep
> 0$, $\theta(\overline{B(x_0,\ep)}) \wedge \overline{B(y_i,\delta)}
\neq 0$ and hence $\overline{B(x_0,\ep)} \wedge
\theta^{-1}(\overline{B(y_i,\delta)}) \neq 0$.  By continuity of
$T^{-1}(\psi g)$ and $f$, we conclude that $T^{-1}(\psi g)(x_0) =
f(x_0)$ and $T^{-1}(\psi g)(x_0) = 0$; thus reaching a
contradiction.
\end{proof}

\begin{lem}\label{lem6.1}
Assume that $x_0\in X$ is an accumulation point.  Let $(U_n) =
(\overline{B(x_n,\ep_n)})$ be a sequence of pairwise disjoint sets,
where $(x_n)$ converges to $x_0$ and $(\ep_n)$ is a positive null
sequence. If $(y_n)$ is a sequence such that $y_n \in \inte
\theta(U_n)$ for each $n$, then $(y_n)$ has a Cauchy subsequence.
\end{lem}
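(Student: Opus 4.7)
The plan is to argue by contradiction. Suppose $(y_n)$ has no Cauchy subsequence; since $Y$ is complete this is equivalent to the set $\{y_n : n \in \N\}$ failing to be totally bounded, so I can pick $\delta > 0$ and, by a standard greedy extraction, pass to an infinite subsequence on which $d'(y_n, y_m) \geq \delta$ whenever $n \neq m$. After relabelling so this is the sequence $(y_n)$ (with its associated $x_n, \ep_n, U_n$), the goal will be to produce a single $G \in \Lip_{\Sig'}(Y, F)$ whose preimage $T^{-1}G$ is forced, via continuity at $x_0$, to take two incompatible values simultaneously.

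To build $G$, I would first invoke Proposition \ref{prop3} at $x_0$ to obtain $f_0 \in \Lip_\Sig(X, E)$ with $f_0(x_0) \neq 0$ and $g_0 := T f_0$ bounded on $Y$. For each $n$, the hypothesis $y_n \in \inte \theta(U_n)$ lets me pick $\rho_n \in (0, \delta/4)$ with $B(y_n, \rho_n) \subseteq \inte \theta(U_n)$; then $\overline{B(y_n, \rho_n)}$ is a regular closed set belonging to $\D(Y)$ and sitting inside $\theta(U_n)$. The open sets $P := \bigcup_k B(y_{2k-1}, \rho_{2k-1})$ and $Q := \bigcup_k B(y_{2k}, \rho_{2k})$ inherit $d'(P,Q) \geq \delta/2 > 0$ from the $\delta$-separation and the bound $\rho_n < \delta/4$, so Lipschitz normality of $\Lip_{\Sig'}(Y)$ yields $\chi \in \Lip_{\Sig'}(Y)$ with $0 \leq \chi \leq 1$, $\chi \equiv 1$ on $P$ and $\chi \equiv 0$ on $Q$. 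Setting $G := \chi g_0$ puts $G \in \Lip_{\Sig'}(Y, F)$ by Proposition \ref{prop0}, since $g_0$ is bounded on all of $Y$.

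The contradiction then proceeds via Proposition \ref{prop1.2}. On $\overline{B(y_{2k-1}, \rho_{2k-1})}$ one has $G = g_0$, so $T^{-1}G - f_0$ vanishes on $\theta^{-1}(\overline{B(y_{2k-1}, \rho_{2k-1})}) \subseteq U_{2k-1}$; on $\overline{B(y_{2k}, \rho_{2k})}$ one has $G = 0$, so $T^{-1}G$ vanishes on $\theta^{-1}(\overline{B(y_{2k}, \rho_{2k})}) \subseteq U_{2k}$. Since $\theta$ is a Boolean isomorphism, each of these $\theta$-preimages is a nonempty element of $\D(X)$, and I can select $z_k$ and $w_k$ in them respectively. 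Both points lie in the corresponding $U_n = \overline{B(x_n, \ep_n)}$, and since $x_n \to x_0$ and $\ep_n \to 0$ it follows that $z_k, w_k \to x_0$. Continuity of $T^{-1}G$ then gives $T^{-1}G(x_0) = \lim f_0(z_k) = f_0(x_0) \neq 0$ from one subsequence and $T^{-1}G(x_0) = \lim 0 = 0$ from the other, which is the desired contradiction.

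The step I expect to require the most care is the choice of the localisation radii $\rho_n$: they must simultaneously be small enough that $\overline{B(y_n, \rho_n)}$ fits inside the (possibly shrinking) $\inte \theta(U_n)$, yet uniformly bounded by $\delta/4$ so that $P$ and $Q$ remain positively separated. Taking $\rho_n = \min(\eta_n, \delta/4)$, where $\eta_n > 0$ is any witnessing interior radius of $\theta(U_n)$ at $y_n$, resolves this without requiring any uniform lower bound on $\eta_n$. Everything else is a direct application of the tools already assembled in \S 2: Lipschitz normality, the Boolean isomorphism $\theta$ from Proposition \ref{prop1.2}, and Proposition \ref{prop3}.
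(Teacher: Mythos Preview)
Your argument is correct and follows essentially the same route as the paper's proof: both proceed by contradiction, extract a $\delta$-separated subsequence, invoke Proposition~\ref{prop3} to obtain $f$ with $f(x_0)\neq 0$ and $Tf$ bounded, multiply $Tf$ by a Lipschitz-normal cutoff that is $1$ near the odd $y_n$'s and $0$ near the even ones, and then use Proposition~\ref{prop1.2} together with the Boolean isomorphism $\theta$ to force $T^{-1}G$ to take the values $f_0$ and $0$ along two sequences converging to $x_0$. The only cosmetic difference is that you choose variable radii $\rho_n<\delta/4$ small enough that $\overline{B(y_n,\rho_n)}\subseteq\theta(U_n)$, obtaining the containment $\theta^{-1}(\overline{B(y_n,\rho_n)})\subseteq U_n$ directly, whereas the paper uses the fixed radius $\delta$ and instead argues that $\theta(U_n)\wedge\overline{B(y_n,\delta)}\neq 0$ implies $U_n\wedge\theta^{-1}(\overline{B(y_n,\delta)})\neq 0$; either variant yields the required points $z_k,w_k\to x_0$.
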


\begin{proof}
If the lemma fails, by passing to a subsequence if necessary, we may
assume that there exists $\delta > 0$ such that $d'(y_m,y_n) >
3\delta$ whenever $m \neq n$.  There exists a $[0,1]$-valued $\psi \in
\Lip_{\Sig'}(Y)$ such that $\psi = 1$ on $\overline{B(y_n,\delta)}$ for all
odd $n$ and $\psi = 0$ on $\overline{B(y_n,\delta)}$ for all even
$n$. According to Proposition \ref{prop3}, there exists $f \in
\Lip_{\Sig}(X,E)$ such that $f(x_0) \neq 0$ and $Tf$ is bounded on $Y$. Then
$g = \psi Tf \in \Lip_{\Sig'}(Y,F)$.  Now $g = Tf$ on
$\overline{B(y_n,\delta)}$ for all odd $n$ and $g = 0$ on
$\overline{B(y_n,\delta)}$ for all even $n$. Hence $T^{-1}g = f$ on
$\theta^{-1}(\overline{B(y_n,\delta)})$ for all odd $n$ and $T^{-1}g
= 0$ on $\theta^{-1}(\overline{B(y_n,\delta)})$ for all even $n$.
Since $\theta(U_n) \wedge \overline{B(y_n,\delta)}\neq 0$ for all
$n$, $U_n \wedge \theta^{-1}(\overline{B(y_n,\delta)})\neq 0$ for
all $n$.  Therefore, we can find a sequence $(z_n)$ such that $z_n
\in U_n$ for all $n$, $T^{-1}g(z_n) = f(z_n)$ for odd $n$ and
$T^{-1}g(z_n) = 0$ for odd $n$.  This is impossible since $T^{-1}g$
and $f$ are continuous, $f(x_0) \neq 0$ and $(z_n)$ converges to
$x_0$.
\end{proof}

\begin{prop}\label{prop7}
For any $x_0 \in X$, $\cap_{\ep > 0}\theta(\overline{B(x_0,\ep)})$ contains exactly $1$ point.
\end{prop}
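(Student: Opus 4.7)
The plan is to combine Lemma \ref{lem6}, which already supplies the ``at most one'' half of the conclusion, with a construction that produces at least one point in the intersection. The natural dichotomy is whether $x_0$ is isolated or an accumulation point of $X$, and I would handle these cases separately.

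If $x_0$ is isolated, then for all sufficiently small $\ep > 0$ we have $\overline{B(x_0,\ep)} = \{x_0\}$, which is an atom of $\cD(X)$ (being both open and closed). Since $\theta$ is a Boolean isomorphism of $\cD(X)$ onto $\cD(Y)$, it sends the nonempty atom $\{x_0\}$ to a nonempty element of $\cD(Y)$; by Lemma \ref{lem6} this image has at most one point, so it is a singleton, and the intersection $\cap_{\ep>0}\theta(\overline{B(x_0,\ep)})$ stabilizes at exactly that singleton.

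If $x_0$ is an accumulation point, the idea is to manufacture a candidate limit point in $Y$ via Lemma \ref{lem6.1} and the completeness of $Y$. I would pick a sequence $(x_n)$ in $X\bs\{x_0\}$ converging to $x_0$, and after thinning choose positive $\ep_n\to 0$ with $\ep_n < \tfrac12 d(x_n,x_0)$ so that the closed balls $U_n = \overline{B(x_n,\ep_n)}$ are pairwise disjoint. Each $U_n$ is a nonempty element of $\cD(X)$, hence $\theta(U_n)\in\cD(Y)$ is nonempty with nonempty interior, and I can pick $y_n\in\inte\theta(U_n)$. Lemma \ref{lem6.1} delivers a Cauchy subsequence of $(y_n)$, which converges to some $y^\ast\in Y$ since $Y$ is complete. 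For any fixed $\ep>0$, once $n$ is large enough that $d(x_n,x_0)+\ep_n < \ep$ we have $U_n\subseteq \overline{B(x_0,\ep)}$, hence $y_n\in \theta(\overline{B(x_0,\ep)})$; as this set is closed, the limit $y^\ast$ lies in it as well. Since $\ep$ was arbitrary, $y^\ast$ belongs to the intersection, which combined with Lemma \ref{lem6} pins it down as a single point.

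The step I expect to be the main obstacle is the accumulation case: one has to be careful that the sequence $(y_n)$ obtained from the $U_n$'s is genuinely eligible for Lemma \ref{lem6.1} (which requires the $U_n$ to be pairwise disjoint closed balls shrinking toward $x_0$), and one has to use both the monotonicity of $\theta$ on $\cD(Y)$ and the closedness of $\theta(\overline{B(x_0,\ep)})$ to transport the limit from the sequence back into every $\theta(\overline{B(x_0,\ep)})$. The completeness of $Y$ enters critically at precisely this point.
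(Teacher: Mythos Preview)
Your proposal is correct and follows essentially the same route as the paper: the isolated/accumulation dichotomy, the nonemptiness of $\theta(\{x_0\})$ in the isolated case, and in the accumulation case the construction of pairwise disjoint balls $U_n = \overline{B(x_n,\ep_n)}$, the choice of $y_n\in\inte\theta(U_n)$, the appeal to Lemma~\ref{lem6.1} for a Cauchy subsequence, and the passage to the limit inside each closed set $\theta(\overline{B(x_0,\ep)})$ via monotonicity of $\theta$ and completeness of $Y$. The only cosmetic difference is that the paper does not bother to conclude that $\theta(\{x_0\})$ is a singleton in the isolated case, merely that it lies in the intersection.
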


\begin{proof}
In view of Lemma \ref{lem6}, it suffices to prove that the
intersection in question is nonempty. If $x_0$ is an isolated point,
then $\{x_0\} \in \D(X)$ and $\theta(\{x_0\}) \subseteq \cap_{\ep >
0}\theta(\overline{B(x_0,\ep)})$.  So the proposition holds in this
case.

Assume that $x_0$ is an accumulation point.  Let $(U_n) =
(\overline{B(x_n,\ep_n)})$ be a sequence of pairwise disjoint sets,
where $(x_n)$ converges to $x_0$ and $(\ep_n)$ is a positive null
sequence. Pick a sequence $(y_n)$ with $y_n \in \inte \theta(U_n)$
for each $n$. By Lemma \ref{lem6.1}, $(y_n)$ has a Cauchy
subsequence. Relabeling, we may assume that $(y_n)$ is Cauchy and
hence converges to some $y_0 \in Y$.  For any $\ep > 0$, there
exists $n_0$ such that $U_n \subseteq \overline{B(x_0,\ep)}$ for all
$n \geq n_0$. Thus $y_n \in \theta(U_n) \subseteq
\theta(\overline{B(x_0,\ep)})$ for all $n \geq n_0$.  Since the
latter set is closed, $y_0 \in \theta(\overline{B(x_0,\ep)})$.
\end{proof}

Define $h(x_0)$ to be the unique point in $\cap_{\ep >
0}\theta(\overline{B(x_0,\ep)})$ for all $x_0 \in X$.  Similarly, we
may define $k : Y \to X$ by setting $k(y_0)$ to be the unique point
in $\cap_{\delta > 0}\theta^{-1}(\overline{B(y_0,\delta)})$.

\begin{prop}\label{prop8}
The map $h$ is a homeomorphism whose inverse is $k$.
\end{prop}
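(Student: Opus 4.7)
The plan is to show first that $h$ and $k$ are mutually inverse bijections between $X$ and $Y$, and then that $h$ is continuous; continuity of $k=h^{-1}$ will follow by applying the same argument to $T^{-1}$, whose associated Boolean isomorphism is $\theta^{-1}$.

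To verify $k\circ h=\mathrm{id}_X$, fix $x_0\in X$ and set $y_0=h(x_0)$. Suppose towards a contradiction that $x_0\neq k(y_0)$; by uniqueness in Proposition \ref{prop7}, there is $\delta>0$ with $x_0\notin\theta^{-1}(\overline{B(y_0,\delta)})$. Since the latter set lies in $\cD(X)$, hence is closed, some $\ep>0$ satisfies
\[ \overline{B(x_0,\ep)}\cap\theta^{-1}(\overline{B(y_0,\delta)})=\emptyset,\]
so $\overline{B(x_0,\ep)}\wedge\theta^{-1}(\overline{B(y_0,\delta)})=0$ in $\cD(X)$. Since $\theta$ is a Boolean isomorphism, $\theta(\overline{B(x_0,\ep)})\wedge\overline{B(y_0,\delta)}=0$, which unfolds to $\inte\theta(\overline{B(x_0,\ep)})\cap B(y_0,\delta)=\emptyset$. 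Because $\theta(\overline{B(x_0,\ep)})$ is the closure of its interior and $B(y_0,\delta)$ is open, this upgrades to $\theta(\overline{B(x_0,\ep)})\cap B(y_0,\delta)=\emptyset$, contradicting $y_0\in\theta(\overline{B(x_0,\ep)})$. The identity $h\circ k=\mathrm{id}_Y$ follows by the symmetric argument, so $h$ is a bijection with inverse $k$.

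For continuity of $h$ at $x_0$: the isolated-point case is trivial, so assume $x_0$ is an accumulation point. If some sequence $x_n\to x_0$ satisfies $d'(h(x_n),h(x_0))\geq\delta>0$ for all $n$, thin it so that $d(x_{n+1},x_0)<d(x_n,x_0)/3$; the closed balls $U_n=\overline{B(x_n,\ep_n)}$ with $\ep_n=d(x_n,x_0)/3$ are then pairwise disjoint with $\ep_n\to 0$. The key technical wrinkle is that Lemma \ref{lem6.1} requires interior points, while $h(x_n)$ is known only to lie in $\theta(U_n)$. Using $\theta(U_n)=\overline{\inte\theta(U_n)}$, pick $y_n\in\inte\theta(U_n)$ with $d'(y_n,h(x_n))<1/n$. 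Lemma \ref{lem6.1} then yields a Cauchy subsequence $y_{n_k}\to y^*$; for any $\eta>0$, eventually $U_{n_k}\subseteq\overline{B(x_0,\eta)}$, so $y_{n_k}\in\theta(\overline{B(x_0,\eta)})$ and by closedness $y^*\in\theta(\overline{B(x_0,\eta)})$. As $\eta>0$ was arbitrary, Proposition \ref{prop7} forces $y^*=h(x_0)$; but then $h(x_{n_k})\to h(x_0)$, contradicting the choice of $\delta$.

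The main obstacle, as indicated, is the interior-point perturbation in the continuity step: Lemma \ref{lem6.1} is not directly applicable to the sequence $(h(x_n))$, and the transfer from $(y_n)$ to $(h(x_n))$ rests essentially on the regularity $A=\overline{\inte A}$ enjoyed by members of $\cD$. Once continuity of $h$ is established, repeating the argument with $T^{-1}$ and $\theta^{-1}$ in place of $T$ and $\theta$ gives continuity of $k$, and Step 1 identifies $k$ as $h^{-1}$, so $h$ is a homeomorphism.
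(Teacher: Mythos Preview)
Your proof is correct and follows essentially the same approach as the paper: the bijection step uses the Boolean isomorphism $\theta$ to transfer between balls around $x_0$ and balls around $y_0$ (you argue by contradiction, the paper directly, but the content is the same), and the continuity step is identical---pick $y_n\in\inte\theta(U_n)$ close to $h(x_n)$, apply Lemma~\ref{lem6.1}, and use the closedness of $\theta(\overline{B(x_0,\eta)})$ together with uniqueness (Lemma~\ref{lem6}/Proposition~\ref{prop7}) to identify the limit as $h(x_0)$.
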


\begin{proof}
Suppose that $x_0 \in X$ and $h(x_0) = y_0$. For any $\ep, \delta >
0$, $\overline{B(y_0,\delta)}\wedge \theta(\overline{B(x_0,\ep)})
\neq 0$ and hence $\theta^{-1}(\overline{B(y_0,\delta)}) \wedge
\overline{B(x_0,\ep)} \neq 0$.  In particular, for any $\delta > 0$,
we can find $x_n \in \theta^{-1}(\overline{B(y_0,\delta)}) \wedge
\overline{B(x_0,1/n)}$ for each $n$.  Since
$\theta^{-1}(\overline{B(y_0,\delta)})$ is closed, $x_0 \in
\theta^{-1}(\overline{B(y_0,\delta)})$.  As $\delta > 0$ is
arbitrary, this shows that $k(y_0) = x_0$.  By symmetry, $h(k(y_0))
= y_0$ for all $y_0 \in Y$.

It remains to prove the continuity of $h$.  The continuity of $k$
follows by symmetry.  Let $x_0$ be a point in $X$.  Since $h$ is
trivially continuous at an isolated point, we may assume that $x_0$
is an accumulation point.  Let $(x_n)$ be a pairwise distinct
sequence converging to $x_0$.  Choose a positive null sequence
$(\ep_n)$ so that $(U_n) = (\overline{B(x_n,\ep_n)})$ is pairwise
disjoint. For each $n$, $h(x_n) \in \theta(U_n) = \overline{\inte
\theta(U_n)}$.  Hence there exists $y_n \in \inte\theta(U_n)$ so
that $d'(y_n,h(x_n)) < 1/n$. By Lemma \ref{lem6.1}, $(y_n)$ has a
subsequence converging to a point $y_0$ in $\cap_{\ep
> 0}\theta(\overline{B(x_0,\ep)})$. Consequently, $(h(x_n))$ has a
subsequence that converges to $y_0$. By Lemma \ref{lem6}, $y_0 =
h(x_0)$. The continuity of $h$ at $x_0$ follows.
\end{proof}

Observe that if $f = 0$ on an open set $U$ containing $x_0$, then $f
= 0$ on $\overline{B(x_0,\ep)}$ for some $\ep > 0$ and hence $Tf =
0$ on $\theta(\overline{B(x_0,\ep)})$.  In particular, $Tf(h(x_0)) =
0$.

\begin{prop}\label{prop10}
If $f \in \Lip_\Sig(X,E)$ and $f(x_0) = 0$, then $Tf(h(x_0)) = 0$.
\end{prop}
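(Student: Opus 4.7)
I plan to argue by contradiction. Suppose $Tf(h(x_0)) = v \ne 0$. If $x_0$ is an isolated point, then $\{x_0\}\in\cD(X)$ and Proposition~\ref{prop1.2} immediately yields $Tf = 0$ on $\theta(\{x_0\}) = \{h(x_0)\}$, contradicting $v\ne 0$; so I may assume that $x_0$ is an accumulation point.

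The strategy, in the spirit of Proposition~\ref{prop3} and Lemma~\ref{lem6.1}, is to choose distinct $x_n\to x_0$ and radii $\epsilon_n$ so that $U_n=\overline{B(x_n,\epsilon_n)}$ are pairwise disjoint and avoid $x_0$, and then to construct a single $G\in\Lip_\Sig(X,E)$ such that $G$ agrees with $f$ on an open neighborhood of each $x_{2n-1}$ and $G$ vanishes on an open neighborhood of each $x_{2n}$. The observation preceding the proposition then yields $TG(h(x_{2n-1})) = Tf(h(x_{2n-1}))\to v$ (by continuity of $Tf$ and of $h$) while $TG(h(x_{2n})) = 0$; combined with $h(x_n)\to h(x_0)$ and continuity of $TG$ at $h(x_0)$, this forces $v=0$, a contradiction.

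The heart of the argument is the construction of $G$. The direct candidate $G=\varphi f$ with $\varphi\in\Lip_\Sig(X)$ alternating between $1$ and $0$ on neighborhoods of the $x_n$'s cannot exist, since continuity of $\varphi$ at $x_0$ cannot reconcile both subsequential limits. I instead intend to use Lipschitz normality to obtain, for each $n$, a bump $\psi_n\in\Lip_\Sig(X)$ supported in $U_n$ with $\psi_n=1$ on $\overline{B(x_n,\epsilon_n/2)}$, and to form the disjoint-support sum $G = \sum_{n\text{ odd}}\psi_n f$. Because $T$ is biseparating, the $T(\psi_n f)$ are pairwise disjointly supported, which decouples $TG$ at each $h(x_n)$ into the contribution of a single term: $TG(h(x_{2n-1}))=Tf(h(x_{2n-1}))$ by the observation applied to $G-f=(\psi_{2n-1}-1)f$ near $x_{2n-1}$, and $TG(h(x_{2n}))=0$ since $G$ vanishes on $\overline{B(x_{2n},\epsilon_{2n}/2)}$.

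The principal technical obstacle is controlling $\om_G$, since Lemma~\ref{lem2.1} requires a uniform modulus majorant on the $\psi_n f$'s that is not automatic from Lipschitz normality alone. I plan to secure this bound by an adaptive inductive choice: given the modulus bound $\om_{\psi_n}\le K_n(\sigma_n\wedge 1)$ of each bump (as in the proof of Lemma~\ref{lem1.2}), I will choose $x_n$ and $\epsilon_n$ small enough that $M_n:=\sup_{U_n}\|f\|$ (which tends to $0$ because $f(x_0)=0$) dominates $K_n$ in the product $M_n\om_{\psi_n}$, leaving $\om_{\psi_n f}\le\om_f+M_nK_n(\sigma_n\wedge1)$ controlled by a single element of $K\Sig$. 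If needed, I will replace the Lipschitz-normality bumps by bumps of a radial function $\rho\in\Lip_\Sig(X)$ with $\rho(x_0)=0$ and $\rho>0$ elsewhere (obtained from Lemma~\ref{lem1.2} applied to $U=X\setminus\{x_0\}$), whose truncations via Lemma~\ref{bump} carry the uniform modulus bound $\om_{\tilde\rho_k}\le 3\om_\rho$. Executing this uniform-modulus bookkeeping while preserving the property that $G$ truly equals $f$ (not merely a decaying multiple) on neighborhoods of the odd $x_n$'s is the delicate step that I expect to be the main difficulty.
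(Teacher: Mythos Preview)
Your contradiction scheme is exactly the paper's, and you have correctly isolated the only obstacle: a uniform modulus bound on the pieces of $G$. The spatial-bump route does not close this gap in general. The constants $K_n$ in $\om_{\psi_n}\le K_n(\sigma_n\wedge 1)$ are known only \emph{after} $x_n,\ep_n$ (and hence $\psi_n$) are fixed, so you cannot afterwards shrink $M_n=\sup_{U_n}\|f\|$ to beat $K_n$ without changing the ball and hence the bump; there is no mechanism forcing $M_nK_n$ to be summable, or even bounded, across all Lipschitz-normal $\Lip_\Sig$. Your fallback via an auxiliary $\rho$ is the right instinct, but as stated the truncations $\tilde\rho_k$ from Lemma~\ref{bump} equal $\rho$ (not $1$) on sublevel sets; converting them into $[0,1]$-valued plateaus equal to $1$ on small sets reintroduces a normalization of order $1/a$, and you are back to an uncontrolled product $M_n\om_{\psi_n}$.

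The paper resolves this by taking, in effect, $\rho=f$ itself and abandoning the product $\psi_n f$ altogether. Applying Lemma~\ref{bump} to $f$ twice and subtracting yields, for each $z\in C(f)$, a function $g_z$ with $\om_{g_z}\le 6\om_f$ such that $g_z=f$ on $\{\tfrac12\|f(z)\|\le\|f\|\le 2\|f(z)\|\}$ and $g_z=0$ off $\{\tfrac14\|f(z)\|\le\|f\|\le 4\|f(z)\|\}$. After reducing to $x_0$ an accumulation point of $C(f)$ (not merely of $X$), choose $x_n\to x_0$ in $C(f)$ with $\|f(x_n)\|$ decreasing geometrically so that these $\|f\|$-annuli are disjoint; then $\tilde f=\sum_k g_{x_{2k-1}}$ has $\om_{\tilde f}\le 12\om_f$ by Lemma~\ref{lem2.1}, hence $\tilde f\in\Lip_\Sig(X,E)$. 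The sets $A_n=\{\tfrac12\|f(x_n)\|<\|f\|<2\|f(x_n)\|\}$ are open neighborhoods of $x_n$ on which $\tilde f=f$ (odd $n$) or $\tilde f=0$ (even $n$), and your continuity argument at $h(x_0)$ goes through via Proposition~\ref{prop1.2}. The point you were circling is that Lemma~\ref{bump} already produces pieces that \emph{equal} $f$ on level-set neighborhoods, with modulus $\le 3\om_f$ independent of the thresholds; no multiplication by a bump, and hence no term $M_n\om_{\psi_n}$, is ever needed.
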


\begin{proof}
By the observation preceding the proposition, we only need to consider the case where  $x_0$ is an accumulation point of $C(f)$.
Suppose $z$ belongs to $C(f)$.  By Lemma \ref{bump}, there are functions $g_1,g_2: X\to E$ with $\om_{g_i} \leq 3 \om_f$, $i=1,2$, so that
\[ g_1(x) = \begin{cases}
            f(x) &\text{ if $\|f(x)\| \leq 2\|f(z)\|$}\\
            0 &\text { if $\|f(x)\| \geq 4\|f(z)\|$}
            \end{cases} \]
and
\[g_2(x) = \begin{cases}
            f(x) &\text{ if $\|f(x)\| \leq \frac{\|f(z)\|}{4}$}\\
            0 &\text { if $\|f(x)\| \geq \frac{\|f(z)\|}{2}$}.
            \end{cases}
\]
Set $g = g_1-g_2$.  Then $\om_g \leq 6\om_f$ and
\[ g(x) = \begin{cases}
            f(x) &\text{ if $\|f(x)\| \in [\frac{\|f(z)\|}{2},2\|f(z)\|]$}\\
            0 &\text { if $\|f(x)\| \notin [\frac{\|f(z)\|}{4},4\|f(z)\|]$}
            \end{cases}. \]
Let $(x_n)$ be a sequence in $C(f)$ converging to $x_0$ so that $\|f(x_{n+1})\|\leq 16\|f(x_n)\|$ for all $n$. For each $n$, let $f_n$ be the function $g$ described above with $z = x_{2n-1}$.
By Lemma \ref{lem2.1}, $\tilde{f} = \sum f_n$ belongs to $\Lip_\Sig(X,E)$.  For each $n$,
\[ A_n = \{x\in X: \frac{\|f(x_n)\|}{2} < \|f(x)\| < 2\|f(x_n)\|\} \]
is an open neighborhood of $x_n$.  Furthermore, $\tilde{f} = f$ on $\overline{A_n}$ if $n$ is odd and $\tilde{f} = 0$ on $\overline{A_n}$ if $n$ is even. By Proposition \ref{prop1.2}, $T\tilde{f} = Tf$
on $\theta(\overline{A_n})$ for odd $n$ and $T\tilde{f} = 0$ on
$\theta(\overline{A_n})$ for even $n$.
In particular, $T\tilde{f}(h(x_n)) = Tf(h(x_n))$ if $n$ is odd and $0$ if $n$ is even.  By continuity of $T\tilde{f}$, $Tf$ and $h$, we have
\begin{align*}
Tf(h(x_0)) &= \lim Tf(h(x_{2n-1})) = \lim T\tilde{f}(h(x_{2n-1})) \\&=
T\tilde{f}(h(x_0)) = \lim T\tilde{f}(h(x_{2n})) = 0.
\end{align*}
\end{proof}

The following is the main result of this section.  It includes as a
special case the result of \cite{AD} characterizing biseparating
maps between spaces of bounded Lipschitz functions.

\begin{thm}\label{thm11}
Let $X, Y$ be complete metric spaces and $E, F$ be Banach spaces.
Suppose that $\Lip_\Sig(X,E)$ and $\Lip_{\Sig'}(Y,F)$ are
generalized Lipschitz spaces that are Lipschitz normal. If $T :
\Lip_{\Sig}(X,E) \to \Lip_{\Sig'}(Y,F)$ is a linear biseparating
map, then there exist a homeomorphism $h : X \to Y$ and, for each $y
\in Y$, a vector space isomorphism $S_y : E \to F$ such that
\begin{equation}\label{eq2}
Tf(y) = S_y(f(h^{-1}(y))) \quad \text{for all $y \in Y$}.
\end{equation}
\end{thm}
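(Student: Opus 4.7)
\medskip

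\noindent\textbf{Plan of proof.}
The homeomorphism $h:X\to Y$ is already available from Proposition \ref{prop8}, so the task is to construct the fiber maps $S_y$ and verify the pointwise formula. My plan is to \emph{define} $S_y:E\to F$ by
\[ S_y(u) := Tf(y), \]
where $f\in\Lip_\Sig(X,E)$ is any function with $f(h^{-1}(y))=u$, and then show this is well-defined, linear, and bijective, with the required formula being automatic from the definition.

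To see well-definedness, suppose $f_1,f_2\in\Lip_\Sig(X,E)$ both satisfy $f_i(h^{-1}(y))=u$. Then $(f_1-f_2)(h^{-1}(y))=0$, so Proposition \ref{prop10} applied to $f_1-f_2$ gives $T(f_1-f_2)(h(h^{-1}(y)))=T(f_1-f_2)(y)=0$; hence $Tf_1(y)=Tf_2(y)$. At least one such $f$ exists for every $u\in E$, since the constant function $\hat{u}$ trivially lies in $\Lip_\Sig(X,E)$ and $\hat{u}(h^{-1}(y))=u$, so the domain of $S_y$ is all of $E$. Linearity of $S_y$ then follows at once: given $u_1,u_2\in E$ and scalars $c_1,c_2$, the function $c_1\hat{u}_1+c_2\hat{u}_2$ realizes the value $c_1u_1+c_2u_2$ at $h^{-1}(y)$, and the linearity of $T$ gives $S_y(c_1u_1+c_2u_2)=c_1T\hat{u}_1(y)+c_2T\hat{u}_2(y)=c_1S_y(u_1)+c_2S_y(u_2)$. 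Finally, the formula \eqref{eq2} is nothing but the definition of $S_y$ evaluated at $u=f(h^{-1}(y))$.

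It remains to show each $S_y$ is a bijection. For this I would apply the entire machinery of \S 2 and the first part of \S 3 to the biseparating map $T^{-1}:\Lip_{\Sig'}(Y,F)\to\Lip_\Sig(X,E)$. This produces a homeomorphism $Y\to X$ which, by symmetry with the construction in Proposition \ref{prop8}, must coincide with $h^{-1}$, and for each $x\in X$ a linear map $R_x:F\to E$ satisfying $T^{-1}g(x)=R_x(g(h(x)))$ for all $g\in\Lip_{\Sig'}(Y,F)$. Taking $g=Tf$ and $x=h^{-1}(y)$, we obtain
\[ f(h^{-1}(y)) \;=\; T^{-1}(Tf)(h^{-1}(y)) \;=\; R_{h^{-1}(y)}\!\bigl(Tf(y)\bigr) \;=\; R_{h^{-1}(y)}\!\bigl(S_y(f(h^{-1}(y)))\bigr), \]
so $R_{h^{-1}(y)}\circ S_y$ is the identity on the set of all values $f(h^{-1}(y))$, which is all of $E$. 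Running the identity $TT^{-1}=\mathrm{id}$ in the analogous fashion gives $S_y\circ R_{h^{-1}(y)}=\mathrm{id}_F$, so $S_y$ and $R_{h^{-1}(y)}$ are inverses of each other.

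The main obstacle is really already absorbed into Proposition \ref{prop10}: once we know that vanishing of $f$ at a point $x_0$ forces $Tf$ to vanish at $h(x_0)$, well-definedness of the $S_y$ is immediate, and the rest is a symmetric bookkeeping argument. The only subtlety I would flag is making sure to invoke the $T^{-1}$ construction carefully enough that the homeomorphism it produces is literally $h^{-1}$ rather than some a priori different map; this follows because both $h$ and the corresponding homeomorphism for $T^{-1}$ are defined through the Boolean isomorphism $\theta$ (respectively $\theta^{-1}$) via the same intersection-of-closed-balls recipe used in Proposition \ref{prop7}.
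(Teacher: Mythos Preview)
Your proposal is correct and follows essentially the same approach as the paper: both define $S_y$ via constant functions (your well-definedness argument shows that your a priori more general definition $S_y(u)=Tf(y)$ reduces to the paper's $S_y u = T\hat{u}(y)$), both derive formula \eqref{eq2} from Proposition~\ref{prop10}, and both obtain bijectivity of $S_y$ by applying Proposition~\ref{prop10} symmetrically to $T^{-1}$. The only cosmetic difference is that the paper checks surjectivity directly (pull back $\hat{v}$ through $T$) and injectivity via one application of Proposition~\ref{prop10} to $T^{-1}$, whereas you package both by explicitly constructing the inverse family $R_x$; these amount to the same argument.
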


\begin{proof}
Let $h: X\to Y$ be defined as above.  Then $h$ is a homeomorphism by
Proposition \ref{prop8}.  Define $S_y : E\to F$ by $S_yu
= T\hat{u}(y)$ for all $y \in Y$.  If $f \in \Lip_{\Sig}(X,E)$ and $y \in Y$,
then $(f - \hat{u})(h^{-1}(y)) = 0$, where $u = f(h^{-1}(y))$.
Therefore, $Tf(y) = T\hat{u}(y)$ by Proposition \ref{prop10}.  Thus
(\ref{eq2}) holds.  The linearity of $S_y$ follows from that of $T$.
If $v \in F$, there exists $f$ such that $Tf = \hat{v}$.  Hence, for any $y$,
taking $u = f(h^{-1}(y))$, we find that $S_yu = v$. This shows
that each $S_y$ is onto.  Finally, if $S_yu = 0$, then
$T\hat{u}(y) = 0$.  Applying Proposition \ref{prop10} to $T^{-1}$
and $h^{-1}$, we find that $u = T^{-1}T\hat{u}(h^{-1}(y)) = 0$. Thus
$S_y$ is one-to-one.
\end{proof}

\section{Continuity}

In this section, let $T: \Lip_\Sig(X,E) \to \Lip_{\Sig'}(Y,F)$ be biseparating map.  Thus $T$ has the form given in (\ref{eq2}) of
Theorem \ref{thm11}, where $h$ is a homeomorphism and $S_y$ is a vector space isomorphism for all $y \in Y$.  We investigate the continuity properties of the
family $(S_y)$ and of the operator $T$ with respect to suitable
topologies.   We also consider the metric properties of the mapping
$h$.

\begin{prop}\label{prop12}
If $y_0$ is an accumulation point in $Y$, then $S_{y_0}$ is a bounded linear operator. Furthermore, if
\begin{equation}\label{eq}
\sup_{\sigma\in \Sig'}\sigma(\ep) < \infty \text{ for all $\ep \geq 0$},
\end{equation}
then $S_y$ is bounded at all $y \in Y$ except for finitely many isolated points of $Y$.
\end{prop}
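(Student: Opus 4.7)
My plan is to argue by contradiction in both parts, using the representation $Tf(y)=S_y(f(h^{-1}(y)))$, Proposition \ref{prop10} (which forces $Tf(y_0)=S_{y_0}(f(x_0))$ with $x_0=h^{-1}(y_0)$), Lipschitz normality to build bumps on $X$, and axiom (MS2) to assemble them into a function in $\Lip_\Sig(X,E)$ via a weighted sum.

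For the first part I will suppose $S_{y_0}$ unbounded and pick distinct $y_n\to y_0$ with $y_n\neq y_0$; set $x_n=h^{-1}(y_n)\to x_0$. \emph{Case 1.} If only finitely many $S_{y_n}$ are unbounded, discard them. For each $u\in E$, continuity of $T\hat u$ at $y_0$ gives $S_{y_n}(u)=T\hat u(y_n)\to T\hat u(y_0)=S_{y_0}(u)$, so $(S_{y_n}u)_n$ is bounded in $u$; Banach--Steinhaus then gives $C:=\sup_n\|S_{y_n}\|<\infty$ and $\|S_{y_0}(u)\|\leq C\|u\|$, a contradiction. \emph{Case 2.} Infinitely many $S_{y_n}$ are unbounded; after relabeling all are. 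Choose $\vp_n\in\Lip_\Sig(X)$, $0\leq\vp_n\leq 1$, supported in pairwise disjoint closed balls around $x_n$ each avoiding $x_0$, with $\vp_n(x_n)=1$ and $\om_{\vp_n}\leq K_n(\sigma_n\wedge 1)$, using Lipschitz normality. Using unboundedness of $S_{y_n}$, pick $u_n\in E$ with $\|u_n\|\leq 1$ and $\|S_{y_n}(u_n)\|\geq n^2(K_n+1)$. The pointwise sum $f:=\sum_n \vp_n u_n/(n^2(K_n+1))$ satisfies
\[
\om_f\leq\sum_n\frac{\|u_n\|\,\om_{\vp_n}}{n^2(K_n+1)}\leq\sum_n\frac{\sigma_n\wedge 1}{n^2}\leq K\sigma
\]
for some $\sigma\in\Sig$ by (MS2), so $f\in\Lip_\Sig(X,E)$. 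Since $f(x_0)=0$, Proposition \ref{prop10} gives $Tf(y_0)=0$, while $\|Tf(y_n)\|=\|S_{y_n}(u_n)\|/(n^2(K_n+1))\geq 1$, contradicting continuity of $Tf$ at $y_0$.

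For the second part, assuming (\ref{eq}), I suppose infinitely many $y\in Y$ have $S_y$ unbounded; by the first part all are isolated, so I enumerate them as a distinct sequence $(y_n)$. Passing to a subsequence, either (A) $y_n\to y^*$ for some $y^*\in Y$ or (B) $\inf_{m\neq n}d'(y_m,y_n)\geq\eta>0$. Case (A) applies the Case 2 construction with $y^*$ in place of $y_0$ (noting $S_{y^*}$ is bounded by the first part) to derive a contradiction at $y^*$. In case (B), fix any $y^*\in Y$, set $M(\ep):=\sup_{\sigma\in\Sig'}\sigma(\ep)$, which is finite by (\ref{eq}), and choose $u_n$ with $\|u_n\|\leq 1$ and $\|S_{y_n}(u_n)\|\geq n^4(K_n+1)(M(d'(y_n,y^*))+1)$ using unboundedness of $S_{y_n}$; after also arranging bump supports to avoid $x^*=h^{-1}(y^*)$, the same weighted sum gives $f\in\Lip_\Sig(X,E)$ with $\|Tf(y_n)\|\geq n^2(M(d'(y_n,y^*))+1)$ and $Tf(y^*)=0$. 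But $Tf\in\Lip_{\Sig'}(Y,F)$ yields $\om_{Tf}\leq K'\sigma'\leq K' M$ for some $K'<\infty$, so $\|Tf(y_n)\|\leq K' M(d'(y_n,y^*))$; combining forces $n^2(M(d'(y_n,y^*))+1)\leq K' M(d'(y_n,y^*))$, which fails for large $n$.

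The main obstacle is the bookkeeping of the bump constants $K_n$: the vectors $u_n$ must be selected after $K_n$ is known, which is why I exploit unboundedness of $S_{y_n}$ (not $S_{y_0}$) in Case 2 of the first part and throughout the second part. The Banach--Steinhaus reduction in Case 1 is what lets me avoid a delicate passage to a further subsequence aligning the continuity neighborhoods of $T\hat u_n$ at $y_0$ with the already-chosen $(y_n)$.
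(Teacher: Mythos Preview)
Your argument is correct, but the route differs from the paper's in the first part and is redundantly organized in the second.

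For the first assertion, the paper gives a single construction: it exploits the unboundedness of $S_{y_0}$ itself, builds \emph{nested} bumps $\vp_n$ all satisfying $\vp_n(x_0)=1$ with shrinking supports, chooses $u_n$ of small norm but with $\|S_{y_0}u_n\|$ dominating $\sum_{m<n}\|S_{y_0}u_m\|+2n$, and then uses continuity of each $Tf_n\circ h$ at $x_0$ to select $x_n$ \emph{after} $u_n$ so that $Tf_m(h(x_n))$ stays close to $S_{y_0}u_m$. Your two-case approach sidesteps that delicate selection: the Banach--Steinhaus reduction in Case~1 is clean, and in Case~2 the disjoint bumps at the $x_n$ let you read off $Tf(y_n)=S_{y_n}(u_n)/(n^2(K_n+1))$ directly. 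The price is that you must locate points $y_n$ where $S_{y_n}$ is itself unbounded, which is exactly what your case split arranges; the paper's method needs only $S_{y_0}$ unbounded and no case analysis. Both are valid; yours is a bit more elementary, the paper's a bit more uniform.

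For the second assertion, your Case~(B) is essentially the paper's argument (the paper simply takes $y^*=y_1$ and sums from $n\ge 2$, which is what ``arranging bump supports to avoid $x^*$'' should amount to; your phrase ``fix any $y^*\in Y$'' needs that small correction). Note, however, that your dichotomy (A)/(B) is unnecessary: the Case~(B) computation never uses the separation constant $\eta$, so it works for \emph{any} infinite sequence of isolated points with $S_{y_n}$ unbounded. Case~(A) is thus redundant. (For the record, the dichotomy is true --- a sequence with no Cauchy subsequence is not totally bounded, hence has an infinite $\eta$-separated subset --- but you do not need it.)
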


\begin{proof}
Assume on the contrary that $S_{y_0}$ is unbounded for some accumulation point
$y_0$ of $Y$. Set $x_0 = h^{-1}(y_0)$. Then $x_0$ is an accumulation point of $X$.
Choose $\vp_1 \in \Lip_\Sig(X)$ with values in $[0,1]$ so that $\vp_1(x_0) = 1$ and $\vp_1 = 0$ outside $B(x_0,1)$.  There are $K_1 < \infty$ and $\sigma_1\in \Sig$ so that $\om_{\vp_1}\leq K_1\sigma_1$.  Pick a norm-$1$ vector $u_1$ in
$E$ and let $f_1(x) = \vp_1(x)u_1$.   Since $Tf_1\circ h$ is continuous on $X$ and $Tf_1(h(x_0))
= S_{y_0}u_1$ by (\ref{eq2}), there exists $r_1 \in (0,1)$ such that
$\|Tf_1(h(x)) - S_{y_0}u_1\| \leq 1$ for all $x \in B(x_0,r_1)$.
Choose $x_1 \in B(x_0,r_1)\bs\{x_0\}$. In general, after $x_{n-1}$
and $(u_m)^{n-1}_{m=1}$ have been determined, let $\vp_n \in \Lip_\Sig(X)$ be a $[0,1]$-valued function such that $\vp_n(x_0)=1$ and $\vp_n =0$ outside $B(x_0, d(x_{n-1},x_0))$.
There are $K_n < \infty$ and $\sigma_n\in \Sig$ so that $\om_{\vp_n}\leq K_n\sigma_n$.  Of course, $\om_{\vp_n} \leq 1$ as well. Set $f_n(x) = \vp_n(x)u_n$ for a vector $u_n \in E$ such that $\|u_n\| = (n^2(K_n+1))^{-1}$ and $\|S_{y_0}u_n\| \geq
\sum^{n-1}_{m=1}\|S_{y_0}u_m\| + 2n$.
There exists $r_n$, $0< r_n < d(x_{n-1},x_0)\wedge n^{-1}$ such that
$\|Tf_n(h(x)) - S_{y_0}u_n\| \leq 1$ for all $x \in B(x_0,r_n)$.
Choose $x_n \in B(x_0,r_n)\bs\{x_0\}$. This completes the inductive construction.

Since $\|f_n(x)\| \leq \|u_n\| \leq 1/n^2$ for all $n$ and all $x \in X$, $f = \sum f_n$ exists.  Furthermore, for all $n$,
\[ \om_{f_n} \leq \frac{\om_{\vp_n}}{n^2(K_n+1)} \leq \frac{1}{n^2(K_n+1)}(K_n\sigma_n\wedge 1)\leq  \frac{1}{n^2}(\sigma_n \wedge 1).\]
By condition (MS2) in the definition of modulus sets, we find $K < \infty$ and $\sigma\in \Sig$ so that $\sum n^{-2}(\sigma_n\wedge 1) \leq K\sigma$.  Hence $\om_f \leq K \sigma$ and thus $f \in \Lip_\Sig(X,E)$.
If $m > n$, we have $d(x_n,x_0) \geq d(x_{m-1},x_0)$ and hence $f_m(x_n) = 0$.
For all $n$,
\[
Tf(h(x_n)) = S_{h(x_n)}(f(x_n)) = \sum^n_{m=1}S_{h(x_n)}(f_m(x_n)) = \sum^n_{m=1}Tf_m(h(x_n)).\]
Therefore,
\begin{align*}
\|Tf(h(x_n))\| &\geq \|Tf_n(h(x_n))\| - \sum^{n-1}_{m=1}\|Tf_m(h(x_n))\|\\
&\geq \|S_{y_0}u_n\| - 1 - \sum^{n-1}_{m=1}(\|S_{y_0}u_m\| + 1)\\
&\geq \sum^{n-1}_{m=1}\|S_{y_0}u_m\| + 2n - 1 - \sum^{n-1}_{m=1}(\|S_{y_0}u_m\| + 1) = n.
\end{align*}
However,  $(x_n)$ converges to $x_0$ and thus
$(Tf(h(x_n)))$ converges.  We have reached a contradiction.

Now suppose that (\ref{eq}) holds and that $(y_n)$ is an infinite
sequence so that $S_{y_n}$ is unbounded for all $n$.  By the above,
each $y_n$ is isolated in $Y$, and consequently each $x_n =
h^{-1}(y_n)$ is isolated in $X$. Since $\Lip_\Sig(X)$ is Lipschitz
normal, for each $n$, the function $\vp_n$ defined by $\vp_n(x_n) =
1$ and $\vp_n(x) = 0$ otherwise belongs to $\Lip_\Sig(X)$.  Let $K_n
< \infty$ and $\sigma_n \in \Sig$ be such that $\om_{\vp_n} \leq
K_n\sigma_n$. For each $n \geq 2$, let $b_n = d'(y_n,y_1)$ and
choose $u_n \in E$ with $\|u_n\| = (n^2(K_n+1))^{-1}$ and
$\|S_{y_n}u_n\| > n \sup_{\sigma\in \Sig'}\sigma(b_n)$.  Let $f(x)$
be the pointwise sum $\sum^\infty_{n=2}\vp_n(x)u_n$.  Then
\[ \om_f \leq \sum^\infty_{n=1}\frac{\om_{\vp_n}}{n^2(K_n+1)} \leq \sum^\infty_{n=2}\frac{1}{n^2}(\sigma_n\wedge 1) \leq
K\sigma_0\] for some $K< \infty$ and $\sigma_0 \in \Sig$.  Hence $f
\in \Lip_\Sig(X,E)$. It follows that $Tf \in \Lip_{\Sig'}(Y,E)$ and
so there are $K' < \infty$ and $\sigma' \in \Sig'$ so that $\om_{Tf}
\leq K'\sigma'$. But $Tf(y_n) = S_{y_n}u_n$ for
all $n>1$ and $Tf(y_1) = 0$.  Therefore,
\[ K'\sigma'(b_n) \geq \om_{Tf}(b_n) \geq \|Tf(y_n) - Tf(y_1)\| = \|S_{y_n}u_n\| > n \sup_{\sigma\in
\Sig'}{\sigma(b_n)}\]
for all $n \geq 2$, which is clearly impossible.
\end{proof}

\noindent{\bf Remark}. If $\Lip_{\Sig'}(Y,F)$ consists of bounded
functions only, then we may replace $\Sig'$ with $\{\sigma\wedge 1:
\sigma \in \Sig'\}$.  In this case, (\ref{eq}) is fulfilled and
hence $S_y$ is bounded except for finitely many isolated points of
$Y$. The special case of the result for spaces of bounded Lipschitz
functions was obtained in \cite{AD}. Condition (\ref{eq}) also holds
for the spaces $\Lip(Y,F)$ and $\lip(Y,F)$.
\\

Let $Y_1$ be the set of all $y\in Y$ at which $S_y$ is bounded and
$X_1$ be the set of all $x \in X$ at which $S^{-1}_{h(x)}$ is
bounded.  The next result is a simple application of the Uniform
Boundedness Principle.

\begin{cor}\label{cor13}
Let $r > 0$ be a real number such that $\sigma'(r) < \infty$ for all $\sigma'\in \Sigma'$.  Then $\{S_y: y \in Y_1, d(y,y_0) \leq r\}$
is uniformly bounded for any $y_0 \in Y$.
Similarly, $\{S_y:y\in Y_1\}$ is uniformly bounded if $\sup_{r> 0}\sigma'(r) < \infty$ for all $\sigma'\in \Sigma'$.
\end{cor}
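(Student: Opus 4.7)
The plan is to apply the Banach-Steinhaus (Uniform Boundedness) Theorem. Fix $y_0 \in Y$ and any $u \in E$. I would first invoke the representation furnished by Theorem \ref{thm11}: the constant function $\hat{u}$ lies in $\Lip_\Sig(X,E)$, so $T\hat{u}$ lies in $\Lip_{\Sig'}(Y,F)$, and there exist $K < \infty$ and $\sigma' \in \Sig'$ with $\om_{T\hat{u}} \leq K\sigma'$. Since $T\hat{u}(y) = S_y u$ for every $y \in Y$, each $y$ with $d'(y,y_0) \leq r$ satisfies
\[ \|S_y u\| = \|T\hat{u}(y)\| \leq \|T\hat{u}(y_0)\| + \om_{T\hat{u}}(r) \leq \|T\hat{u}(y_0)\| + K\sigma'(r), \]
which is finite by the hypothesis $\sigma'(r) < \infty$. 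Thus the family $\{S_y : y \in Y_1,\ d'(y,y_0) \leq r\}$ is pointwise bounded on $E$.

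By Proposition \ref{prop12}, each $S_y$ with $y \in Y_1$ is a bounded linear operator from $E$ to $F$. Since $E$ is a Banach space, Banach-Steinhaus then yields
\[ \sup\{\|S_y\| : y \in Y_1,\ d'(y,y_0) \leq r\} < \infty, \]
which is the first assertion. For the second assertion, under the stronger hypothesis $\sup_{r>0}\sigma'(r) < \infty$ for every $\sigma' \in \Sig'$, the same computation gives $\|S_y u\| \leq \|T\hat{u}(y_0)\| + K\sup_{r>0}\sigma'(r) < \infty$ uniformly in $y \in Y$, and one further application of Banach-Steinhaus gives uniform boundedness of $\{S_y : y \in Y_1\}$.

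There is no serious obstacle once Theorem \ref{thm11} and Proposition \ref{prop12} are in place; the one point requiring care is that the Uniform Boundedness Principle applies only to a family consisting of bounded operators, which is precisely what the restriction to the index set $Y_1$ guarantees. This is why the statement must exclude the (finitely many, by Proposition \ref{prop12}) exceptional points at which $S_y$ may fail to be bounded.
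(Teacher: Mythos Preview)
Your proof is correct and follows essentially the same route as the paper's: bound $\|S_yu - S_{y_0}u\|$ via $\om_{T\hat u}(r) \leq K\sigma'(r)$ and then invoke the Uniform Boundedness Principle. One small remark: you do not need Proposition~\ref{prop12} to know that $S_y$ is bounded for $y\in Y_1$, since $Y_1$ is by definition the set of such $y$; and your parenthetical ``finitely many, by Proposition~\ref{prop12}'' only holds under the extra hypothesis~(\ref{eq}), not in general.
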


\begin{proof}
For any $u \in E$, there are $K< \infty$ and $\sigma' \in \Sig'$ such that $\om_{T\hat{u}} \leq K \sigma'$.  If $d(y,y_0) \leq r$, then
\[ \|S_yu-S_{y_0}u\| = \|T\hat{u}(y) - T\hat{u}(y_0)\| \leq K\sigma'(r) < \infty.\]
Hence $\{S_yu: y \in Y_1, d(y,y_0) \leq r\}$ is bounded for all $u \in E$. Thus $\{S_y: y \in Y_1, d(y,y_0) \leq r\}$ is uniformly bounded by the Uniform Boundedness Principle. The second statement is proved similarly.
\end{proof}

In general, we still have ``local uniform boundedness".

\begin{prop}\label{prop13.2}
For all $y_0 \in Y_1$, there is a neighborhood $V$ of $y_0$ in $Y_1$ so that $\{S_y: y\in V\}$ is uniformly bounded.
\end{prop}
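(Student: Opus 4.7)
The plan is to mimic the proof of the Uniform Boundedness Principle via Baire category. The trivial case is when $y_0$ is isolated in $Y$, in which case $V = \{y_0\}$ works, so I may assume $y_0$ is an accumulation point. The goal is to produce an $n_0$ and $r > 0$ such that $\|S_y\| \leq \|S_{y_0}\| + 2/r$ for every $y \in V := Y_1 \cap B(y_0, 1/n_0)$.

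For each $n \in \N$, define
\[ F_n = \{u \in E : \|S_y u - S_{y_0} u\| \leq 1 \text{ for every } y \in Y_1 \cap B(y_0, 1/n)\}. \]
Two properties of this family drive the argument. First, each $F_n$ is closed: for any $y \in Y_1$ both $S_y$ and $S_{y_0}$ are bounded linear maps, so $u \mapsto \|S_y u - S_{y_0} u\|$ is continuous on $E$, and $F_n$ is an intersection of the preimages of $[0,1]$ under these continuous seminorms. Second, $\bigcup_n F_n = E$: given $u \in E$, the function $y \mapsto T\hat u(y) = S_y u$ lies in $\Lip_{\Sig'}(Y,F)$ and is in particular continuous at $y_0$, so there is some $\delta_u > 0$ for which $\|S_y u - S_{y_0} u\| \leq 1$ on $B(y_0, \delta_u)$, placing $u$ in $F_n$ for $n \geq 1/\delta_u$.

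Since $E$ is a Banach space, the Baire category theorem yields some $F_{n_0}$ with nonempty interior, containing a ball $u_0 + r B_E$. For $v \in r B_E$ and $y \in Y_1 \cap B(y_0, 1/n_0)$, both $u_0$ and $u_0 + v$ lie in $F_{n_0}$, so by linearity of $S_y - S_{y_0}$,
\[ \|(S_y - S_{y_0})v\| \leq \|(S_y - S_{y_0})(u_0+v)\| + \|(S_y - S_{y_0})u_0\| \leq 2. \]
Homogeneity gives $\|S_y - S_{y_0}\| \leq 2/r$, hence $\|S_y\| \leq \|S_{y_0}\| + 2/r$ throughout $V$, which is the desired neighborhood of $y_0$ in $Y_1$.

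The delicate point is the closedness of $F_n$: this is exactly the reason the definition of $F_n$ quantifies only over $y \in Y_1$ rather than over $y \in Y$, since outside $Y_1$ the maps $u \mapsto \|S_y u\|$ are not continuous and the argument would collapse. Once that restriction is in place, the rest is a textbook Baire/UBP calculation.
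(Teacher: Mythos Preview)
Your proof is correct, and it takes a genuinely different route from the paper's. The paper argues by contradiction via a gliding-hump construction: assuming the result fails, it picks $y_n \to y_0$ in $Y_1$ and vectors $u_n$ with $\|u_n\| = 2^{-n}$ and $\|S_{y_n}u_n\| \to \infty$, then passes to subsequences so that $\|S_{y_n}u_n\|$ dominates $n + \sum_{m\neq n}\|S_{y_n}u_m\|$; setting $w = \sum u_n$, one computes $\|T\hat{w}(y_n)\| \geq n$, contradicting the continuity of $T\hat{w}$ at $y_0$. Your argument instead runs the Baire category proof of the Uniform Boundedness Principle directly on the family $\{S_y - S_{y_0} : y \in Y_1 \cap B(y_0,1/n)\}$. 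Both proofs use exactly the same two ingredients --- continuity of each $T\hat{u}$ at $y_0$ and boundedness of $S_y$ for $y \in Y_1$ --- and both ultimately rest on the completeness of $E$; the paper exploits completeness through the convergence of $\sum u_n$, you through Baire category. Your version is shorter and avoids the somewhat delicate subsequence extraction, while the paper's hump construction is more explicit and in keeping with the hands-on style used elsewhere (e.g.\ Proposition~\ref{prop12} and Lemma~\ref{lem4.20}). Your observation that restricting the quantifier in $F_n$ to $Y_1$ is precisely what makes each $F_n$ closed is the crux of why the Baire route goes through here.
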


\begin{proof}
If the proposition fails, there are sequences $(y_n)$ in $Y_1$ converging to $y_0$ and $(u_n)$ in $E$ with $\|u_n\| = 1/2^n$ so that $\|S_{y_n}u_n\| \to \infty$.
Note that $(S_{y_n}u_m)_m$ converges to $0$ for each $n$ and $\lim_nS_{y_n}u_m = \lim_nT\hat{u}_m(y_n) = T\hat{u_m}(y_0)$ for each $m$.  Thus, by passing to subsequences, we may assume that, for each $n$,
$\|S_{y_n}u_n\| \geq n + \sum_{m\neq n}\|S_{y_n}u_m\|$.
Let $w = \sum u_n$.  Then
\[ \|T\hat{w}(y_n)\| = \|S_{y_n}w\| \geq \|S_{y_n}u_n\| - \sum_{m\neq n}\|S_{y_n}u_m\| \geq n\]
for all $n$.  However, $T\hat{w}$ is continuous and so $(T\hat{w}(y_n))$ converges, contrary to the above.
\end{proof}

Next we consider the continuity of $T$.  First we look at a
diagonalization lemma.

\begin{lem}\label{lem4.20}
Let $(g_n)$ be a sequence of functions from $Y$ into $F$. Suppose
that there are a positive sequence $(c_n)$, sequences $(y^n_1)$,
$(y^n_2)$ in $Y$ and $C < \infty$ so that
\begin{align*}
\sup_n\frac{\|g_n(y^n_1) - g_n(y^n_2)\|}{c_n} &= \infty,\\
\sup_n\frac{\|g_m(y^n_1)-g_m(y^n_2)\|}{c_n} &= L_m < \infty \text{
for all $m$}, \\
\intertext{and}
\sup_{m,n}\|g_m(y^n_1) - g_m(y^n_2)\| &= C < \infty.
\end{align*}
Then  there exists a nonnegative summable sequence $(\ep_n)$ so that if
the pointwise sum $g = \sum \ep_ng_n$ converges on the set
$\{y^n_1,y^n_2: n \in \N\}$, then $\sup_n\|g(y^n_1) -
g(y^n_2)\|/{c_n} = \infty$.
\end{lem}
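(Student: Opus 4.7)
The plan is to apply the Uniform Boundedness Principle (Banach--Steinhaus). For each $n \in \N$, I would define a linear operator $L_n : \ell^1(\R) \to F$ by
\[
L_n\bigl((a_j)_j\bigr) = \frac{1}{c_n}\sum_{j=1}^\infty a_j\bigl(g_j(y^n_1) - g_j(y^n_2)\bigr).
\]
The third hypothesis bounds $\|g_j(y^n_1) - g_j(y^n_2)\|$ by $C$ uniformly in $j$, so each series converges absolutely in $F$ and $L_n$ is a well-defined bounded operator on $\ell^1(\R)$ with $\|L_n\| \leq C/c_n$.

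Next, evaluating $L_n$ on the $n$-th standard basis vector $e_n \in \ell^1$ gives $\|L_n(e_n)\| = \|g_n(y^n_1) - g_n(y^n_2)\|/c_n$, so the first hypothesis yields
\[
\sup_n\|L_n\| \geq \sup_n \|g_n(y^n_1) - g_n(y^n_2)\|/c_n = \infty.
\]
Since $\ell^1(\R)$ is a Banach space, Banach--Steinhaus then forces the existence of some $a \in \ell^1(\R)$ with $\sup_n\|L_n(a)\| = \infty$.

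Decompose $a = a^+ - a^-$ into its positive and negative parts; both lie in $\ell^1_{\geq 0}$. Since $\|L_n(a)\| \leq \|L_n(a^+)\| + \|L_n(a^-)\|$ for every $n$, at least one of $\sup_n\|L_n(a^\pm)\|$ must be infinite, and I would take $\ep = (\ep_j)$ to be whichever part realizes this; $\ep$ is then nonnegative and summable. If the pointwise sum $g = \sum_j \ep_j g_j$ converges on $\{y^n_1, y^n_2 : n \in \N\}$, then by linearity $g(y^n_1) - g(y^n_2) = c_n L_n(\ep)$ for each $n$, so $\sup_n \|g(y^n_1) - g(y^n_2)\|/c_n = \sup_n\|L_n(\ep)\| = \infty$, as required.

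The main obstacle is simply the setup of the operators $L_n$; the crucial input is that the third hypothesis supplies the uniform bound needed to define $L_n$ on all of $\ell^1$. After that, the argument is essentially Banach--Steinhaus plus the trivial positive/negative parts decomposition.
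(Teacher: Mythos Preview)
Your proof is correct and takes a genuinely different route from the paper's. The paper argues by an explicit diagonal construction: it sets $K_n = \|g_n(y^n_1)-g_n(y^n_2)\|/c_n$, inductively selects indices $n_1<n_2<\cdots$ and weights $\ep_{n_k}$ satisfying the growth conditions
\[
\ep_{n_k}K_{n_k} \geq \max\Bigl\{3\sum_{m=1}^{k-1}\ep_{n_m}L_{n_m},\,k\Bigr\}
\quad\text{and}\quad
3C\sum_{m>k}\ep_{n_m} \leq \ep_{n_k}K_{n_k}c_{n_k},
\]
and sets $\ep_n=0$ off this subsequence; a direct three-term estimate then gives $\|g(y^{n_k}_1)-g(y^{n_k}_2)\|\geq kc_{n_k}/3$. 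Your argument replaces all of this by a single application of Banach--Steinhaus to the family $L_n:\ell^1\to F$, followed by the positive/negative decomposition to force nonnegativity. The Banach--Steinhaus approach is cleaner and, notably, never uses the second hypothesis $L_m<\infty$ at all --- that hypothesis enters the paper's proof precisely to control the ``earlier terms at later points'' contribution $\sum_{m<k}\ep_{n_m}L_{n_m}$, which your functional-analytic packaging absorbs automatically. On the other hand, the paper's proof is entirely elementary (no Baire category) and yields an explicit $(\ep_n)$ supported on a subsequence, with a quantitative lower bound on the differences; your $(\ep_n)$ is non-constructive. Either approach suffices for the application in Proposition~\ref{prop4.23}.
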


\begin{proof}
Let $K_n = \|g_n(y^n_1) - g_n(y^n_2)\|/{c_n}$.  Choose $n_1 < n_2 < \cdots$ and a summable
sequence $(\ep_{n_k})$ so that $\ep_{n_k}K_{n_k} \geq
\max\{3\sum^{k-1}_{m=1}\ep_{n_m}L_{n_m},k\}$ and $3C\sum^\infty_{m=k+1}\ep_{n_m}
\leq \ep_{n_k}K_{n_k}c_{n_k}$ for all $k$. Define $\ep_n = 0$ if $n \neq n_k$ for any $k$. If $g = \sum \ep_ng_n$
converges pointwise on $\{y^n_1,y^n_2: n \in \N\}$,
\begin{align*}
\|g(y^{n_k}_1) &- g(y^{n_k}_2)\|\\
&\geq \ep_{n_k}{\|g_{n_k}(y^{n_k}_1) - g_{n_k}(y^{n_k}_2)\|} -
\sum^{k-1}_{m=1}\ep_{n_m}L_{n_m}c_{n_k} - C\sum^\infty_{m=k+1}\ep_{n_m}.
\end{align*}
 Thus
\[ \|g(y^{n_k}_1) - g(y^{n_k}_2)\| \geq \ep_{n_k}K_{n_k}c_{n_k} - \frac{\ep_{n_k}K_{n_k}c_{n_k}}{3}
- \frac{\ep_{n_k}K_{n_k}c_{n_k}}{3} \geq \frac{kc_{n_k}}{3}\]
for all $k$.
\end{proof}

The next proposition is a form of continuity of the operator $T$. For a function $f \in \Lip_\Sig(X,E)$ and a subset $U$ of $X$, let $\|f\|_U = \sup\{\|f(x)\| : x\in U\}$.

\begin{prop}\label{prop4.23}
Suppose that $U$ is a subset of $X$ such that
$V = h(U) \subseteq Y_1$ and that $M =
\sup\{\|S_y\|: y \in V\} < \infty$. Then there exists $K < \infty$
so that for any $f\in \Lip_\Sig(X,E)$ with $\om_f \leq \sigma$ for
some $\sigma \in {\Sig}$ and $\|f\|_U \leq 1$, we have $\om^V_{Tf} \leq K\sup_{\sigma'\in
\Sig'}\sigma'$, where
\[\om^V_{Tf}(t) =
\sup\{\|Tf(y_1)-Tf(y_2)\|: y_1,y_2 \in V, d'(y_1,y_2)\leq t\}.\]
\end{prop}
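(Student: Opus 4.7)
The plan is a proof by contradiction, using the diagonalization Lemma \ref{lem4.20} to condense a sequence of ``bad'' witnesses into a single $f \in \Lip_\Sig(X,E)$ whose image under $T$ violates any $\Sig'$-modulus bound on $V$. If the conclusion fails, then for each $n$ there are $f_n \in \Lip_\Sig(X,E)$ with $\om_{f_n}\leq \sigma_n \in \Sig$ and $\|f_n\|_U \leq 1$, a positive $t_n$, and points $y^n_1,y^n_2 \in V$ with $d'(y^n_1,y^n_2)\leq t_n$ satisfying $\|Tf_n(y^n_1) - Tf_n(y^n_2)\| > n\,c_n$, where $c_n := \sup_{\sigma'\in\Sig'}\sigma'(t_n)$. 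One may assume $0 < c_n < \infty$: $c_n = \infty$ makes the inequality impossible, while $c_n = 0$ forces $\om_{Tf_n}(t_n) = 0$ (since $Tf_n \in \Lip_{\Sig'}(Y,F)$), contradicting the choice of $y^n_i$.

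Reduce to a globally bounded setting by applying Lemma \ref{bump} to each $f_n$ with $a=1$, $b=2$: this yields $\tilde f_n \in \Lip_\Sig(X,E)$ with $\|\tilde f_n\|_X \leq 1$, $\tilde f_n = f_n$ on $U$ (as $\|f_n\|_U \leq 1$), and $\om_{\tilde f_n} \leq 3\sigma_n$. Since $Tf(y) = S_y(f(h^{-1}(y)))$ on $V = h(U)$ depends only on $f|_U$, we have $T\tilde f_n = Tf_n$ on $V$; relabel $\tilde f_n$ as $f_n$ and set $g_n = Tf_n$. The hypotheses of Lemma \ref{lem4.20} are then in place: the divergence condition holds by construction; for fixed $m$, $g_m \in \Lip_{\Sig'}(Y,F)$ yields $\om_{g_m} \leq K'_m\sigma'_m$ for some $\sigma'_m \in \Sig'$, whence $\|g_m(y^n_1)-g_m(y^n_2)\|/c_n \leq K'_m\sigma'_m(t_n)/c_n \leq K'_m$; and $\|g_m(y)\| \leq M\|f_m\|_U \leq M$ for $y \in V$ gives $\sup_{m,n}\|g_m(y^n_1)-g_m(y^n_2)\| \leq 2M$.

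Apply Lemma \ref{lem4.20} to obtain a nonnegative summable $(\ep_n)$ with the stated diagonalization property, and set $f = \sum \ep_n f_n$. The series converges uniformly because $\|f_n\|_X \leq 1$, and combining $\om_{f_n} \leq 2$ with $\om_{f_n} \leq 3\sigma_n$ gives $\om_{f_n} \leq 6(\sigma_n\wedge 1)$, so $\om_f \leq 6\sum \ep_n(\sigma_n\wedge 1)$; by (MS2) there exist $\sigma \in \Sig$ and $K_0 < \infty$ with $\om_f \leq K_0\sigma$, hence $f \in \Lip_\Sig(X,E)$. For $y \in V \subseteq Y_1$ the operator $S_y$ is bounded, so $Tf(y) = S_y(f(h^{-1}(y))) = \sum \ep_n S_y(f_n(h^{-1}(y))) = \sum \ep_n g_n(y)$, meaning the pointwise sum $g := \sum \ep_n g_n$ converges on $\{y^n_1,y^n_2\}$ and agrees with $Tf$ there. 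Lemma \ref{lem4.20} then gives $\sup_n\|Tf(y^n_1)-Tf(y^n_2)\|/c_n = \infty$, contradicting the estimate $\|Tf(y^n_1)-Tf(y^n_2)\| \leq \om_{Tf}(t_n) \leq K'\sigma'(t_n) \leq K'c_n$ furnished by $Tf \in \Lip_{\Sig'}(Y,F)$.

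The principal obstacle is securing $f \in \Lip_\Sig(X,E)$; this is what forces the preliminary truncation via Lemma \ref{bump}, so that per-summand bounds of the form $\mathrm{const}\cdot(\sigma_n\wedge 1)$ become available for condition (MS2). A secondary point is passing the infinite sum through $T$ to identify $Tf|_V$ with $\sum \ep_n Tf_n|_V$, which rests on the boundedness of $S_y$ for $y \in Y_1$.
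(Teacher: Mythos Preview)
Your proof is correct and follows essentially the same route as the paper's: assume failure, extract witnesses $(f_n, t_n, y^n_1, y^n_2)$, truncate each $f_n$ to a globally bounded function agreeing with $f_n$ on $U$, verify the three hypotheses of Lemma~\ref{lem4.20}, and assemble $f=\sum\ep_n f_n\in\Lip_\Sig(X,E)$ via (MS2) to derive the contradiction. The only difference is cosmetic: the paper truncates with the bare retraction $R:E\to B_E$ of Lemma~\ref{lem1} (giving $\om_{R\circ f_n}\leq 2\sigma_n$), whereas you invoke the slightly heavier Lemma~\ref{bump}; both yield a function bounded by $1$ and equal to $f_n$ on $U$, so the rest of the argument is identical. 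Your explicit disposal of the degenerate cases $c_n\in\{0,\infty\}$ is a welcome clarification that the paper leaves implicit.
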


\begin{proof}
Otherwise, for all $n$, we find $\overline{f}_n \in \Lip_\Sig(X,E)$,
$\sigma_n \in {\Sig}$ and $t_n> 0$ such that $\om_{\overline{f}_n}
\leq \sigma_n$, $\|\ol{f}_n\|_U\leq 1$ and
$\sup_n[\om^V_{T\overline{f}_n}(t_n)/\sup_{\sigma'\in\Sig'}\sigma'(t_n)]
= \infty$. Let  $c_n = \sup_{\sigma'\in\Sig'}\sigma'(t_n)$. There
are $(y^n_1,y^n_2) \in V\times V$ such that $d'(y^n_1,y^n_2) \leq t_n$
and that
$\sup_n\|T\overline{f}_n(y^n_1)-T\overline{f}_n(y^n_2)\|/c_n =
\infty$. Let $x^n_i = h^{-1}(y^n_i) \in U$, $i = 1,2$.  Consider the retraction $R: E \to B_E$ as in Lemma
\ref{lem1}. Set $f_n = R\circ \overline{f}_n$.  Then $\om_{f_n} \leq
2\om_{\overline{f}_n} \leq 2\sigma_n$ and $\om_{f_n}\leq 2$. Let
$g_n = Tf_n$ for all $n$.  Then, for all $m,n$,
\[ \|g_m(y^n_1) - g_m(y^n_2)\| = \|S_{y^n_1}f_m(x^n_1) -
S_{y^n_2}f_m(x^n_2)\|.\] %
Since $\overline{f}_n(x^n_1), \overline{f}_n(x^n_2)
\in B_E$, $S_{y^n_i}f_n(x^n_i) = S_{y^n_i}\overline{f}_n(x^n_i) =
T\overline{f}_n(x^n_i)$ for $n \in \N$, $i = 1,2$.  Thus
\[\sup_ n\frac{\|g_n(y^n_1) - g_n(y^n_2)\|}{c_n} = \infty.\]
Since $g_m \in \Lip_{\Sig'}(Y,F)$,
\[ \sup_ n\frac{\|g_m(y^n_1) - g_m(y^n_2)\|}{c_n} < \infty \text{
for all $m$}.\]%
Moreover, for all $m$ and $n$,
\[ \|g_m(y^n_1)- g_m(y^n_2)\| \leq M(\|f_m(x^n_1)\| +
\|f_m(x^n_2)\|) \leq 2M.
\]
Therefore, Lemma \ref{lem4.20} applies to $(g_n)$ and we obtain a
summable sequence $(\ep_n)$ as in the lemma.  Since $\|f_n(x)\| \leq
1$ for all $n$ and $x$, $f = \sum \ep_n f_n$ converges pointwise on
$X$.  Also,
\[\om_f \leq \sum\ep_n\om_{f_n} \leq
\sum\ep_n(2\sigma_n\wedge 2) = 2\sum \ep_n(\sigma_n\wedge 1).\]
By (MS2), $f \in \Lip_{\Sig}(X,E)$ and hence $Tf \in
\Lip_\Sig(X,E)$. Since $S_y$ is bounded for all $y \in V$, for all
such $y$,
\[ Tf(y) = S_{y}f(h^{-1}(y)) = \sum\ep_nS_{y}f_n(h^{-1}(y))
= \sum\ep_ng_n(y).\]
But by the conclusion of Lemma \ref{lem4.20}, $\sup_n\|Tf(y^n_1) -
Tf(y^n_2)\|/c_n = \infty$, contradicting the fact that $Tf \in
\Lip_{\Sig'}(Y,F)$.
\end{proof}

If $0<\alpha \leq 1$, then $\Lip_\alpha(X,E) =
\Lip_{\{\sigma_\alpha\}}(X,E)$, where $\sigma_\alpha(t) = t^\alpha$. Similarly, $\lip_\alpha(X,E) = \Lip_\Sigma(X,E)$, where $\Sigma$ consists of all modulus functions $\sigma$ such that $\sigma(t) \leq t^\alpha$ and $\lim_{t\to 0}\sigma(t)/t^\alpha = 0$.
Define the
$\Lip_\alpha$ constant of $f\in \Lip_\alpha(X,E)$ to be $L_\alpha(f) =
\sup\{\om_f(t)/t^\alpha: t > 0\}$.  For spaces of Lipschitz ($\Lip$)
functions, the next corollary was obtained in \cite{AD}.

\begin{cor}\label{cor4.24}
Let $T$ be a biseparating map from $\Lip(X,E)$ onto $\Lip(Y,F)$,
respectively, from $\lip_\alpha(X,E)$ onto $\lip_\alpha(Y,F)$.
Suppose that $U$ is a bounded subset of $X$ such that $V = h(U)$ is
a bounded subset of $Y_1$. Then there exists $K < \infty$ so that
$L_1(Tf_{|V}) \leq K(L_1(f)\vee \|f\|_U)$ for all $f \in \Lip(X,E)$, respectively,
$L_\alpha(Tf_{|V}) \leq K(L_\alpha(f)\vee \|f\|_U)$ for all $f \in
\lip_\alpha(X,E)$.
\end{cor}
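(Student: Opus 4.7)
The plan is to deduce both statements from Proposition \ref{prop4.23} by a simple homogeneity argument. Write $\alpha = 1$ in the Lipschitz case, so that in both settings we aim to bound $L_\alpha(Tf_{|V})$ by $K(L_\alpha(f) \vee \|f\|_U)$. The corresponding modulus sets are $\Sigma = \Sigma' = \{\sigma(t) = t\}$ in the Lipschitz case, and $\Sigma = \Sigma' = \{\sigma : \sigma(t) \leq t^\alpha,\ \lim_{t\to 0^+}\sigma(t)/t^\alpha = 0\}$ in the $\lip_\alpha$ case. In either case every $\sigma' \in \Sigma'$ satisfies $\sigma'(t) \leq t^\alpha$, so that $\sup_{\sigma' \in \Sigma'}\sigma'(t) \leq t^\alpha$ for all $t$.

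First I would verify the hypothesis $M = \sup\{\|S_y\| : y \in V\} < \infty$ of Proposition \ref{prop4.23}. Since $V$ is bounded, fix $y_0 \in V$ and pick $r > \diam(V)$; then $V \subseteq \{y \in Y_1 : d'(y,y_0) \leq r\}$, and because every $\sigma' \in \Sigma'$ is finite at $r$ (equal to $r$ or bounded by $r^\alpha$), Corollary \ref{cor13} delivers the required uniform bound $M < \infty$.

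Next I would normalize. Assuming $f \not\equiv 0$, set $C = L_\alpha(f) \vee \|f\|_U$ and let $\tilde f = f/C$. Then $\|\tilde f\|_U \leq 1$ and $\om_{\tilde f}(t) \leq t^\alpha$ for all $t \geq 0$. In the Lipschitz case this says $\om_{\tilde f} \leq \sigma$ for $\sigma(t) = t \in \Sigma$. In the $\lip_\alpha$ case, because $\tilde f \in \lip_\alpha(X,E)$ we also have $\om_{\tilde f}(t)/t^\alpha \to 0$ as $t \to 0^+$, so $\om_{\tilde f}$ is itself a modulus function lying in $\Sigma$. Thus the hypothesis of Proposition \ref{prop4.23} is met by $\tilde f$, yielding a constant $K$ (depending only on $M$ and the geometry of $V$, not on $f$) with
\[ \om^V_{T\tilde f}(t) \leq K \sup_{\sigma' \in \Sigma'}\sigma'(t) \leq K t^\alpha. \]
Dividing by $t^\alpha$ and taking the supremum over $t > 0$ gives $L_\alpha(T\tilde f_{|V}) \leq K$, and multiplying back by $C$ using linearity of $T$ yields $L_\alpha(Tf_{|V}) \leq KC = K(L_\alpha(f) \vee \|f\|_U)$.

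The argument is essentially routine once the machinery of Proposition \ref{prop4.23} is in place; the only mildly delicate point is checking that $\om_{\tilde f}$ genuinely belongs to $\Sigma$ in the $\lip_\alpha$ case, but this is immediate from the defining little-$o$ condition satisfied by $f$.
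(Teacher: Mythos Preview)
Your proof is correct and follows exactly the approach of the paper. The paper's proof consists of a single sentence invoking Corollary \ref{cor13} to check the uniform-boundedness hypothesis of Proposition \ref{prop4.23}; you have spelled out both that verification and the routine homogeneity/normalization step that the paper leaves implicit.
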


\begin{proof}
Observe that Proposition \ref{prop4.23} applies since $\{S_y: y\in
V\}$ is uniformly bounded by Corollary \ref{cor13}.
\end{proof}

Recall that if $\Sig$ is a modulus set and $\Sig_b = \{\sigma\wedge 1: \sigma \in \Sig\}$, then $\Lip_{\Sig_b}(X,E)$ is precisely the space of all bounded functions in $\Lip_{\Sig}(X,E)$.

\begin{cor}\label{cor4.22}
Let $T: \Lip_{\Sig_b}(X,E) \to \Lip_{\Sig'_b}(Y,F)$ be a biseparating map. There exists $K < \infty$ so that for any $f \in \Lip_{\Sig_b}(X,E)$ with $\om_f \leq \sigma\wedge 1$ for some $\sigma\in \Sig$ and $\|f\|_X \leq 1$, we have $\om^{Y_1}_{Tf} \leq K\sup_{\sigma'\in \Sig}(\sigma'\wedge 1)$.
\end{cor}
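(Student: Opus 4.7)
The plan is to derive this directly from Proposition \ref{prop4.23}, applied to the modulus sets $\Sig_b$ and $\Sig'_b$ in place of $\Sig$ and $\Sig'$, with the choice $U = h^{-1}(Y_1)$ so that $V = h(U) = Y_1$.

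The preliminary step is to verify the uniform boundedness hypothesis $M = \sup\{\|S_y\|: y \in V\} < \infty$ of Proposition \ref{prop4.23}. Since every element of $\Sig'_b$ is of the form $\sigma' \wedge 1$ with $\sigma' \in \Sig'$, every such modulus function satisfies $\sup_{r > 0}\sigma''(r) \leq 1$. The second conclusion of Corollary \ref{cor13}, applied to $T$ regarded as a biseparating map $\Lip_{\Sig_b}(X,E) \to \Lip_{\Sig'_b}(Y,F)$, then asserts that $\{S_y : y \in Y_1\}$ is uniformly bounded, which furnishes the required $M$.

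With $V = Y_1$ and $M$ in hand, I would invoke Proposition \ref{prop4.23} directly. Any $f$ as in the hypothesis of the corollary has $\om_f \leq \sigma \wedge 1$ with $\sigma \wedge 1 \in \Sig_b$, and $\|f\|_U \leq \|f\|_X \leq 1$, so the proposition produces a constant $K < \infty$ (depending only on $M$ and on the modulus sets, not on $f$) with $\om^V_{Tf} \leq K \sup_{\sigma''\in \Sig'_b}\sigma''$. Rewriting $\sup_{\sigma'' \in \Sig'_b}\sigma''(t) = \sup_{\sigma' \in \Sig'}(\sigma' \wedge 1)(t)$ yields exactly the desired estimate for $\om^{Y_1}_{Tf}$.

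The proof is essentially a two-step concatenation, so there is no substantive obstacle; the only point worth checking carefully is that the objects $h$, $(S_y)$ and $Y_1$ attached to $T$ are unaffected by restricting from $\Lip_\Sig, \Lip_{\Sig'}$ to their bounded subspaces. This is automatic because $h$ and $S_y$ are determined by the action of $T$ on constant functions (which already lie in $\Lip_{\Sig_b}$), and boundedness of each $S_y : E \to F$ is a property of the linear map itself, independent of the ambient function space in which $T$ is viewed.
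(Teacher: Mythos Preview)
Your proposal is correct and follows the same approach as the paper: invoke Corollary \ref{cor13} (using that every element of $\Sig'_b$ is bounded by $1$) to obtain $\sup\{\|S_y\|: y\in Y_1\} < \infty$, then apply Proposition \ref{prop4.23} with $U = h^{-1}(Y_1)$, $V = Y_1$. Your final paragraph is unnecessary: since $T$ is \emph{given} as a biseparating map between $\Lip_{\Sig_b}(X,E)$ and $\Lip_{\Sig'_b}(Y,F)$, the objects $h$, $(S_y)$ and $Y_1$ are defined directly for this $T$ with respect to the modulus sets $\Sig_b$, $\Sig'_b$, and there is no ``restriction'' from a larger map to worry about.
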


\begin{proof}
By Corollary \ref{cor13}, $\sup\{\|S_y\|: y \in Y_1\} < \infty$ in this case. The corollary follows immediately from Proposition \ref{prop4.23}.
\end{proof}

Denote by $\U_b(X,E)$ the space of bounded uniformly continuous functions from $X$ into $E$.

\begin{cor}\label{cor4.23}
Let $T:\U_b(X,E) \to \U_b(Y,F)$ be a biseparating map.  Then there exists a finite set $I$ of isolated points of $Y$ and $K < \infty$ such that
$\sup_{y \in Y\bs I}\|Tf(y)\| \leq K \sup_{x\in X}\|f(x)\|$ for all $f \in \U_b(X,E)$.
\end{cor}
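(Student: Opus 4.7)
The plan is to observe that $\U_b(X,E)$ and $\U_b(Y,F)$ fit the framework of Corollary \ref{cor4.22}: taking $\Sig$ to be the set of all modulus functions and $\Sig' = \Sig$, we have $\U_b(X,E) = \Lip_{\Sig_b}(X,E)$ and $\U_b(Y,F) = \Lip_{\Sig'_b}(Y,F)$. Every $\sigma \in \Sig'_b$ satisfies $\sigma \leq 1$, so both the hypothesis (\ref{eq}) of Proposition \ref{prop12} and the stronger hypothesis $\sup_{r>0}\sigma'(r) < \infty$ (for all $\sigma' \in \Sig'_b$) used in the second part of Corollary \ref{cor13} are automatic.

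Given this, I would invoke Proposition \ref{prop12} directly to produce a finite set $I$ of isolated points of $Y$ such that $S_y$ is bounded for all $y \in Y \setminus I$; in particular $Y \setminus I \subseteq Y_1$. Then Corollary \ref{cor13} yields a finite constant $K = \sup\{\|S_y\| : y \in Y_1\}$.

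Finally, I would use the representation from Theorem \ref{thm11}: for any $f \in \U_b(X,E)$ and $y \in Y \setminus I$,
\[
\|Tf(y)\| \,=\, \|S_y(f(h^{-1}(y)))\| \,\leq\, \|S_y\|\,\|f(h^{-1}(y))\| \,\leq\, K\,\sup_{x \in X}\|f(x)\|,
\]
which is the desired inequality. There is no genuine obstacle here; the corollary is essentially a direct specialization of the previously established machinery, combining the finite exceptional set from Proposition \ref{prop12} with the global uniform boundedness of $\{S_y : y \in Y_1\}$ from Corollary \ref{cor13} and the pointwise weighted composition form of $T$.
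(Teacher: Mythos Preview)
Your argument is correct, and it is in fact more direct than the paper's. The paper establishes the same setup ($\Sig=\Sig'$ the set of all modulus functions, $I = Y\setminus Y_1$ a finite set of isolated points via Proposition~\ref{prop12}), but then invokes Corollary~\ref{cor4.22} to bound the restricted modulus $\om^{Y_1}_{Tf}$, fixes a base point $y_0\in Y_1$, and recovers the pointwise estimate by the triangle inequality
\[
\|Tf(y)\| \leq \om^{Y_1}_{Tf}(d'(y,y_0)) + \|S_{y_0}f(x_0)\| \leq K + \|S_{y_0}\|.
\]
You bypass this detour: once the second statement of Corollary~\ref{cor13} gives $\sup_{y\in Y_1}\|S_y\|<\infty$, the weighted composition form from Theorem~\ref{thm11} yields the pointwise bound immediately. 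Your route is shorter and avoids the diagonalization machinery behind Proposition~\ref{prop4.23}; the paper's route has the virtue of exhibiting Corollary~\ref{cor4.23} as a consequence of the modulus-of-continuity estimate in Corollary~\ref{cor4.22}, which is of independent interest.
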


\begin{proof}
Let $\Sig = \Sig'$ be the set of all modulus functions.  Then $\U_b(X,E) = \Lip_{\Sig_b}(X,E)$ and $\U_b(Y,F) = \Lip_{\Sig'_b}(Y,F)$. Recall that $Y_1$ is the set of all $y \in Y$ at which $S_y$ is bounded. In this case, $I = Y\bs Y_1$ consists of finitely many isolated points of $Y$ by Proposition \ref{prop12}.
Fix  $x_0 \in X$ so that $y_0 = h(x_0) \in Y_1$. Suppose that $f \in \U_b(X,E)$ with $\sup_{x\in X}\|f(x)\|\leq 1$.  By Corollary \ref{cor4.22}, there exists $K < \infty$ so that for all $y \in Y_1$,
\begin{align*}
\|Tf(y)\| & \leq \|Tf(y)-Tf(y_0)\| + \|Tf(y_0)\|\\
&\leq \om^{Y_1}_{Tf}(d'(y,y_0)) + \|S_{y_0}f(x_0)\|\\
& \leq K +\|S_{y_0}\|.
\end{align*}
\end{proof}

We also obtain local continuity of $T$ with respect to the
topology of uniform convergence.

\begin{prop}\label{prop4.25}
Let $T: \Lip_\Sig(X,E) \to \Lip_{\Sig'}(Y,F)$ be a biseparting map. If $x_0 \in X$ and
$y_0 = h(x_0) \in Y_1$, then there exists a neighborhood $V$ of
$y_0$ in $Y_1$ and $K< \infty$ so that, setting $U = h^{-1}(V)$, we
have $\sup_{y\in V}\|Tf(y)\| \leq K$ for all $f \in \Lip_\Sig(X,E)$
such that
$\sup_{x \in U}\|f(x)\| \leq 1$.
\end{prop}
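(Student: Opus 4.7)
The plan is to reduce the statement immediately to the local uniform boundedness of the family $(S_y)$ established in Proposition \ref{prop13.2}, combined with the weighted composition representation from Theorem \ref{thm11}.

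First, I would apply Proposition \ref{prop13.2} at the point $y_0 \in Y_1$ to extract a neighborhood $V$ of $y_0$ contained in $Y_1$ on which $M := \sup_{y \in V}\|S_y\|$ is finite. Set $U = h^{-1}(V)$; because $h$ is a homeomorphism (Proposition \ref{prop8}), $U$ is an open neighborhood of $x_0$ in $X$, so the statement about $U$ makes sense.

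Next, for any $f \in \Lip_\Sig(X,E)$ with $\sup_{x \in U}\|f(x)\| \leq 1$ and any $y \in V$, Theorem \ref{thm11} gives the representation $Tf(y) = S_y(f(h^{-1}(y)))$. Since $h^{-1}(y) \in U$, the vector $f(h^{-1}(y))$ has norm at most $1$, so
\[ \|Tf(y)\| \leq \|S_y\|\cdot\|f(h^{-1}(y))\| \leq M. \]
Taking $K = M$ finishes the argument.

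There is essentially no obstacle here: the proposition is immediate once Proposition \ref{prop13.2} and Theorem \ref{thm11} are in hand. All of the substantive work — the existence of the weighted composition representation, and the fact that operator norms of the $S_y$ can be controlled on a whole neighborhood of any point of $Y_1$ — has already been carried out in those earlier results, and the present proposition is essentially a packaging of them into a statement about uniform bounds on $Tf$.
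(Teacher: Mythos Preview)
Your proof is correct and follows essentially the same route as the paper: apply Proposition \ref{prop13.2} to obtain a neighborhood $V$ of $y_0$ in $Y_1$ on which $\sup_{y\in V}\|S_y\| = K < \infty$, set $U = h^{-1}(V)$, and then use the representation $Tf(y) = S_y(f(h^{-1}(y)))$ to conclude $\|Tf(y)\| \leq K$ whenever $\|f\|_U \leq 1$. The paper's proof is precisely this, stated in two lines.
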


\begin{proof}
By Proposition \ref{prop13.2}, there is a neighborhood $V$ of $y_0$
in $Y_1$ such that $\sup\{\|S_y\|: y \in V\} = K < \infty$.  Set $U =
h^{-1}(V)$.
If $f \in \Lip_\Sig(X,E)$ and
$\sup_{x \in U}\|f(x)\| \leq 1$, then for all $y \in V$,
$\|Tf(y)\| = \|S_yf(h^{-1}(y))\| \leq K$.
\end{proof}

We can now deduce the metric properties of the map $h$. For any
$x_1, x_2 \in X$, define
\[ s(x_1,x_2) = \sup\{\|f(x_2)\|: f(x_1) = 0, \om_f \leq \sigma \text{ for some } \sigma \in \Sig\}.\]

\begin{prop}\label{prop4.20}
Let $U$ be a subset of $X_1$ so that $V= h(U)$ is a subset of $Y_1$.
Assume $\sup\{\|S_{y}\|,\|S^{-1}_y\|: y \in V\} = M < \infty$ and
that $\sup\{s(x_1,x_2): x_1,x_2 \in U\}=C <\infty$. There exists $K
< \infty$ such that $s(x_1,x_2) \leq K
\sup_{\sigma'\in\Sig'}\sigma'(d'(h(x_1),h(x_2)))$ for all $x_1, x_2
\in U$.
\end{prop}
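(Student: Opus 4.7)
The plan is to pass from a competitor $f$ in the definition of $s(x_1,x_2)$ to the function $Tf$, apply the modulus-of-continuity bound furnished by Proposition \ref{prop4.23}, and then translate back via the inverse maps $S_y^{-1}$. The hypotheses of Proposition \ref{prop4.20} are tailored exactly so that Proposition \ref{prop4.23} applies (boundedness of $\{\|S_y\|:y\in V\}$ by $M$) and so that inverse information is available (boundedness of $\{\|S_y^{-1}\|:y\in V\}$ by $M$).

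Fix $x_1,x_2 \in U$, let $y_i = h(x_i) \in V$, and take any admissible $f$ for $s(x_1,x_2)$, i.e.\ $f(x_1)=0$ with $\om_f \leq \sigma$ for some $\sigma \in \Sig$. For every $x \in U$ we have $\|f(x)\| \leq s(x_1,x) \leq C$, hence $\|f\|_U \leq C$. Setting $\lambda = \max(C,1)$ and $\tilde f = f/\lambda$, we obtain $\|\tilde f\|_U \leq 1$ and $\om_{\tilde f} \leq \sigma/\lambda \leq \sigma \in \Sig$, placing $\tilde f$ within the scope of Proposition \ref{prop4.23}. That proposition yields a constant $K_0 < \infty$ (depending on $U$, $V$, $M$ but not on $f$) such that $\om^V_{T\tilde f} \leq K_0 \sup_{\sigma'\in\Sig'}\sigma'$. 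Multiplying by $\lambda$ and invoking linearity of $T$ gives
$$\|Tf(y_1) - Tf(y_2)\| \leq \lambda K_0 \sup_{\sigma'\in\Sig'}\sigma'(d'(y_1,y_2)).$$
Since $f(x_1)=0$, the representation $Tf(y) = S_y(f(h^{-1}(y)))$ from Theorem \ref{thm11} makes $Tf(y_1) = 0$, so the left side reduces to $\|Tf(y_2)\|$. Applying $S_{y_2}^{-1}$ and using $\|S_{y_2}^{-1}\|\leq M$ then yields $\|f(x_2)\| \leq M\lambda K_0\sup_{\sigma'\in\Sig'}\sigma'(d'(h(x_1),h(x_2)))$, and taking the supremum over admissible $f$ gives the claim with $K = MK_0\max(C,1)$.

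There is no serious obstacle: the argument is essentially a packaging of Proposition \ref{prop4.23} together with the inverse bound $\|S_y^{-1}\|\leq M$. The only minor subtlety is to rescale by $\max(C,1)$ rather than $C$ itself, so that the condition $\om_{\tilde f} \leq \sigma$ remains a bound by an \emph{element} of $\Sig$ (not by a scalar multiple of one, which Proposition \ref{prop4.23} would not directly accept); dividing by a factor $\geq 1$ only decreases $\om_f$, so this costs nothing.
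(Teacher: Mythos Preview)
Your proof is correct and follows essentially the same route as the paper's: rescale an admissible $f$ by $C\vee 1$ so that Proposition~\ref{prop4.23} applies, use $Tf(h(x_1))=0$ to identify $\|Tf(h(x_2))\|$ with the increment, and then invoke the bound $\|S_{h(x_2)}^{-1}\|\leq M$ to pass back to $\|f(x_2)\|$. The paper's argument differs only in the order of the inequalities (starting from $\|\overline{f}(x_2)\|$ rather than ending there), with the same final constant $MK_0(C\vee 1)$.
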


\begin{proof}
Appeal to Proposition \ref{prop4.23} to find  $K <
\infty$ so that for any $f\in \Lip_\Sig(X,E)$ with $\om_f \leq
\sigma$ for some $\sigma \in {\Sig}$ and $\|f\|_U \leq 1$, we have $\om^V_{Tf} \leq
K\sup_{\sigma'\in \Sig'}\sigma'$.  If $a < s(x_1,x_2)$, $x_1, x_2
\in U$, choose $f \in \Lip_\Sig(X,E)$ and $\sigma \in \Sig$ so that
$f(x_1) = 0$, $\|f(x_2)\| > a$ and $\om_f \leq \sigma$.  Now $x\in U$ implies that $\|f(x)\| \leq s(x_1,x) \leq C$.  Let $\ol{f} = f/(C\vee 1)$. Then $\om_{\ol{f}} \leq \sigma$ and $\|\ol{f}\|_U \leq 1$.  Thus
\begin{align*}
\frac{a}{C\vee 1} &< \|\ol{f}(x_2)\| \leq M{\|S_{h(x_2)}\ol{f}(x_2)\|} =
M{\|T\ol{f}(h(x_1)) - T\ol{f}(h(x_2))\|}\\
&\leq M{\om^V_{T\ol{f}}(d'(h(x_1),h(x_2)))} \leq M{K
\sup_{\sigma'\in\Sig'}\sigma'(d'(h(x_1),h(x_2)))}.
\end{align*}
This proves the proposition.
\end{proof}

\begin{cor}\label{cor4.21} \cite{AD}
Let $T : \Lip(X,E) \to \Lip(Y,F)$ be a biseparating map and $h:X \to
Y$ be the homeomorphism associated to $T$. For any bounded set $U
\subseteq X$ such that $V = h(U)$ is bounded in $Y$, $h$ is
Lipschitz on $U$ and $h^{-1}$ is Lipschitz on $V$.
\end{cor}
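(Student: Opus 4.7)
The strategy is to specialize Proposition~\ref{prop4.20} to the Lipschitz setting and then run the symmetric argument on $T^{-1}$. For $\Lip(X,E)$ the modulus set is $\Sig = \{\sigma_1\}$ with $\sigma_1(t) = t$, and likewise for $\Sig'$; in particular condition (\ref{eq}) of Proposition~\ref{prop12} holds, so $Y \bs Y_1$ and (applying the same reasoning to $T^{-1}$) $X \bs X_1$ each consist of at most finitely many isolated points. The key calculation is that $s(x_1,x_2) = d(x_1,x_2)$ in this setting: any $f \in \Lip(X,E)$ with $f(x_1) = 0$ and $\om_f \leq \sigma_1$ satisfies $\|f(x_2)\| \leq \om_f(d(x_1,x_2)) \leq d(x_1,x_2)$, and the function $f(x) = d(x,x_1)u$ for a unit vector $u \in E$ attains this bound.

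To invoke Proposition~\ref{prop4.20}, let $U^\star = U \cap X_1 \cap h^{-1}(Y_1)$ and $V^\star = h(U^\star)$; these remain bounded and differ from $U, V$ by at most finitely many isolated points. Corollary~\ref{cor13}, applied to $T$ and to $T^{-1}$, yields that $\sup\{\|S_y\|: y \in V^\star\}$ and $\sup\{\|S_y^{-1}\|: y \in V^\star\}$ are both finite, since $V^\star$ (resp.\ $U^\star$) lies in a bounded subset of $Y_1$ (resp.\ $X_1$) and $\sigma_1$ is finite on bounded intervals. Also $\sup\{s(x_1,x_2): x_1,x_2 \in U^\star\} = \diam U^\star < \infty$. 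Proposition~\ref{prop4.20} then produces $K < \infty$ with
\[ d(x_1,x_2) = s(x_1,x_2) \leq K\, d'(h(x_1),h(x_2)) \]
for all $x_1,x_2 \in U^\star$, so $h^{-1}$ is $K$-Lipschitz on $V^\star$. Running the parallel argument with $T^{-1}$ in place of $T$ (the associated homeomorphism being $h^{-1}$) shows that $h$ is Lipschitz on $U^\star$.

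Finally, to pass from $U^\star$ and $V^\star$ to $U$ and $V$, one observes that every missing point is isolated in $X$ (or $Y$), hence bounded away by some positive distance from the rest of the ambient space. Combined with the boundedness of $V$ (resp.\ $U$), the Lipschitz ratios involving such a point are trivially bounded, so finitely many such adjustments enlarge the Lipschitz constants without affecting the conclusion. The only step requiring genuine care is the bookkeeping that ensures $U^\star, V^\star$ fall within the hypotheses of Proposition~\ref{prop4.20}; once that is in place, the corollary follows directly from that proposition together with the identity $s(x_1,x_2) = d(x_1,x_2)$.
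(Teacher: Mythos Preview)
Your proof is correct and follows essentially the same route as the paper's: reduce to $U^\star = U \cap X_1 \cap h^{-1}(Y_1)$ via Proposition~\ref{prop12}, use Corollary~\ref{cor13} (applied to both $T$ and $T^{-1}$) for uniform boundedness of $\|S_y\|$ and $\|S_y^{-1}\|$, identify $s(x_1,x_2)=d(x_1,x_2)$, and invoke Proposition~\ref{prop4.20}. You are somewhat more explicit than the paper in justifying $s(x_1,x_2)=d(x_1,x_2)$ and in handling the finitely many isolated points removed in passing to $U^\star$, but the argument is the same.
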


\begin{proof}
By Proposition \ref{prop12}, the set $Y_1$ consisting of all $y\in
Y$ where $S_y$ is bounded contains all of $Y$ except for finitely
many isolated points of $Y$.  A similar statement holds for $X_1$.
Thus it suffices to show that $h$ is Lipschitz on $U_1 = U \cap X_1
\cap h^{-1}(Y_1)$ and that $h^{-1}$ is Lipschitz on $V_1 = h(U_1)$.
By Corollary \ref{cor13}, $\sup\{\|S_y\|, \|S^{-1}_y\|: y \in V_1\}
< \infty$. For all $x_1,x_2 \in X$, it is clear that the quantity
$s(x_1,x_2)$ defined above has the value $d(x_1,x_2)$.  In
particular, $\sup\{s(x_1,x_2): x_1,x_2 \in V_1\}< \infty$ since
$V_1$ is bounded. By Proposition \ref{prop4.20}, there exists $K
<\infty$ such that $d(x_1,x_2) \leq Kd'(h(x_1),h(x_2))$ for all
$x_1,x_2 \in U_1$.  This shows that $h^{-1}$ is Lipschitz on $V_1$.
The proof for $h$ is similar.
\end{proof}

\noindent{\bf Remark}. If $0 < \alpha < 1$, then for a bounded subset $U$ of the space
$\lip_\alpha(X,E)$, there is a positive constant $K_\alpha$ so that
$s(x_1,x_2) \geq K_\alpha d^\alpha(x_1,x_2)$ for all $x_1, x_2 \in U$.  Hence Corollary
\ref{cor4.21} also applies to biseparating maps between
$\lip_\alpha$ spaces, $0 < \alpha < 1$.

\begin{prop}\label{prop4.26}
Let $T:\Lip_{\Sig}(X,E) \to \Lip_{\Sig'}(Y,F)$ be a biseparating map.  Suppose that $\Lip(Y,F) \subseteq \Lip_{\Sig'}(Y,F)$.  If $U$ is a subset of $X$ so that $V = h(U) \subseteq Y_1$ and $\sup_{y\in V}\|S_y\| < \infty$, then $h$ is uniformly continuous on $U$.
\end{prop}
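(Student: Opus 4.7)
The plan is to argue by contradiction: suppose $h$ is not uniformly continuous on $U$, so there are $\ep_0 > 0$ and sequences $(x_n^1), (x_n^2)$ in $U$ with $d(x_n^1, x_n^2) \to 0$ but $d'(y_n^1, y_n^2) \geq \ep_0$, where $y_n^i := h(x_n^i) \in V$. I will use the hypothesis $\Lip(Y,F)\subseteq \Lip_{\Sig'}(Y,F)$ to produce a Lipschitz $g : Y\to F$ such that $f := T^{-1}g$, which lies in $\Lip_\Sig(X,E)$ and is therefore uniformly continuous on $X$, violates uniform continuity along the pair of sequences $(x_n^1), (x_n^2)$. The crucial first observation is that neither $(y_n^1)$ nor $(y_n^2)$ admits a Cauchy subsequence: if $y_{n_k}^1 \to y_*$ in the complete space $Y$, then continuity of $h^{-1}$ gives $x_{n_k}^1 \to h^{-1}(y_*)$; since $d(x_n^1,x_n^2)\to 0$, also $x_{n_k}^2 \to h^{-1}(y_*)$, whence continuity of $h$ forces $y_{n_k}^2 \to y_*$, contradicting $d'(y_n^1, y_n^2) \geq \ep_0$. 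The argument for $(y_n^2)$ is identical.

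Next, by an iterated two-colour Ramsey argument applied to pairs $\{m,n\}\subset\N$ with thresholds $1/2^k$ on the cross-distances $d'(y_m^i,y_n^j)$, I extract an infinite set of indices along which $\{y_n^1, y_n^2 : n\in \N\}$ becomes pairwise $\eta$-separated for some $\eta\in(0,\ep_0]$. At each Ramsey step, the alternative is that some cross-distance between $(y^1)$- and $(y^2)$-points lies below every threshold; diagonalising this alternative produces a Cauchy subsequence of either $(y_n^1)$ or $(y_n^2)$, which is ruled out above. I expect this combinatorial extraction (in particular, preserving the pairing $(y_n^1, y_n^2)$ within the separated subset) to be the main technical obstacle; after it, the remainder is routine.

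Relabelling the indices, fix a unit vector $v \in F$ and set $\psi_n(y) = \max\{0, 1 - (2/\eta) d'(y, y_n^1)\}$, a Lipschitz bump supported in $B(y_n^1,\eta/2)$. By the $\eta$-separation, the sets $B(y_n^1,\eta/2)$ are pairwise disjoint and contain no point $y_m^2$, so $g(y) := \sum_n \psi_n(y) v$ is well defined (at most one nonzero summand at each point), and a direct estimate shows $g$ is Lipschitz on $Y$ with constant at most $4\|v\|/\eta$; hence $g \in \Lip(Y,F) \subseteq \Lip_{\Sig'}(Y,F)$. Setting $f = T^{-1}g \in \Lip_\Sig(X,E)$, the function $f$ is uniformly continuous on $X$. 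From $Tf(y) = S_y f(h^{-1}(y))$, one computes $f(x_n^1) = S_{y_n^1}^{-1}v$ and $f(x_n^2) = 0$, and the bound $\|S_{y_n^1}\| \leq M := \sup_{y\in V}\|S_y\|$ yields $\|S_{y_n^1}^{-1}v\| \geq 1/M$. Thus $\|f(x_n^1) - f(x_n^2)\| \geq 1/M$ for every $n$, contradicting uniform continuity of $f$ since $d(x_n^1,x_n^2)\to 0$.
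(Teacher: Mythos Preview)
Your argument is correct and follows essentially the same route as the paper's proof: assume $h$ fails to be uniformly continuous on $U$, show neither image sequence $(y_n^1)$, $(y_n^2)$ has a Cauchy subsequence, extract a separated subsequence, build a Lipschitz bump $g$ supported near the $y_n^1$'s and vanishing at the $y_n^2$'s, and contradict the uniform continuity of $f=T^{-1}g$ via the lower bound $\|S_{y_n^1}^{-1}v\|\geq 1/M$. The only difference is in the extraction step: the paper passes directly to $2\delta$-separated subsequences of $(y_n^1)$ and of $(y_n^2)$ (possible since neither has a Cauchy subsequence), observes that each ball $B(y_n^1,\delta)$ can contain at most one $y_m^2$, and then removes the bad cross-pairs by a simple degree argument---this avoids the iterated Ramsey scheme you outline, and yields exactly the two properties you actually use (disjointness of the $B(y_n^1,\eta/2)$ and $y_m^2\notin B(y_n^1,\eta/2)$) without aiming for full pairwise $\eta$-separation of the combined set.
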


\begin{proof}
If the proposition fails, we can find a sequence $((x^n_1,x^n_2))_n$ in $U\times U$ and an $\ep > 0$ so that $\lim d(x^n_1,x^n_2)=0$ and $d'(y^n_1,y^n_2) \geq \ep$ for all $n$, where $y^n_i = h(x^n_i)$. If $(y^n_1)$ has a convergent subsequence, then by continuity of $h^{-1}$ and the fact that $\lim d(x^n_1,x^n_2)=0$, there is a subsequence $I$ of $\N$ so that $(x^n_1)_{n\in I}$ and $(x^n_2)_{n\in I}$ both converge to the same $x_0$.  Then $(y^n_1)_{n\in I}$ and $(y^n_2)_{n\in I}$ must both converge to $h(x_0)$, contrary to their choice.
Thus $(y^n_1)$, and, by symmetry, $(y^n_2)$ cannot have convergent subsequences.  Without loss generality, there exists $\delta$, $0 < 4\delta < \ep$ so that $d(y^n_1,y^m_1), d(y^n_2,y^m_2) > 2\delta$ for all $m \neq n$.  Then the sets $B(y^n_1,\delta), n \in \N$, are pairwise disjoint and each can contain at most one $y^m_2$, in which case $m \neq n$.
Hence we can choose a subsequence $J$ of $\N$ so that $y^n_2 \notin B(y^m_1,\delta)$ if $n,m \in J$.   Pick a normalized vector $v \in F$ and let
$g(y) = v\cdot\sup_{n\in J}(\delta- d(y,y^n_1))^+$.
Then $g \in \Lip(Y,F) \subseteq\Lip_{\Sig'}(Y,F)$ and hence $f = T^{-1}g \in \Lip_\Sig(X,E)$.  In particular, $f$ is uniformly continuous.
However, for all $n\in J$,
\[
\|f(x^n_1) - f(x^n_2)\| = \|S_{y^n_1}^{-1}g(y^n_1) - S_{y^n_2}^{-1}g(y^n_2)\| = \delta\|S_{y^n_2}^{-1}v\|\geq \delta\|S_{y^n_2}\|^{-1}.\]
Since $\sup\|S_{y^n_2}\| < \infty$ and $d(x^n_1,x^n_2)\to 0$, $f$ cannot be uniformly continuous.
\end{proof}

\end{document}